\documentclass[11pt]{article}
\usepackage[T1]{fontenc}
\usepackage{amsmath,amssymb,amsthm,mathtools}
\usepackage{enumitem}
\usepackage{microtype}
\usepackage[hidelinks]{hyperref}
\usepackage[margin=1.15in]{geometry}

\newtheorem{theorem}{Theorem}[section]
\newtheorem{proposition}[theorem]{Proposition}
\newtheorem{lemma}[theorem]{Lemma}
\newtheorem{corollary}[theorem]{Corollary}
\newtheorem{remark}[theorem]{Remark}

\newcommand{\pl}{\operatorname{pl}}
\newcommand{\rev}[1]{\overline{#1}}
\newcommand{\ellT}{\ell_T}

\newcommand{\eps}{\varepsilon}

\title{Palindromic Length in Free Groups: Reflections, Noncrossing Matchings, and Catalan Forms}
\author{Junjie Liao\\ \texttt{2230501006@cnu.edu.cn}}
\date{}

\begin{document}
\maketitle

\begin{abstract}
Let $F=F(X)$ be a free group of finite rank, with palindromic length taken
with respect to the fixed basis $X$.  We embed $F$ as the index-two subgroup
of the universal Coxeter group
$W=F\rtimes_\theta\langle t\mid t^2=1\rangle$, where
$\theta(x)=x^{-1}$ for $x\in X$, and prove
\[
   \pl(g)=\min\{\ellT(g),\ellT(gt)\}.
\]
Dyer's deletion theorem then identifies reflection length with the minimum
number of unmatched positions in a noncrossing equal-label partial matching
on a reduced Coxeter word.  This gives an $O(n^3)$-time,
$O(n^2)$-space algorithm for palindromic length, together with recovery of an
optimal palindromic factorization.

The matching model also gives a structural characterization.  For every
ordered full binary tree with $k$ leaves we define a literal word template
whose leaves are palindromes and whose internal vertices carry arbitrary
words.  A reduced word $w$ represents an element of palindromic length at most $k$
if and only if $w$ is
a literal instance of one of these templates.  Hence the $C_{k-1}$ ordered
binary-tree shapes give a complete finite family for each fixed $k$.  For
$k=4$ the five templates are exactly the five forms proposed by Frid, proving
the completeness of that list.  A companion Lean 4 development verifies the four-palindrome
classification end to end for every finite rank, including the ordinary
reduced-word formulation and the literal five-form conclusion.
\end{abstract}

\medskip
\noindent\textbf{2020 Mathematics Subject Classification.}
20F05, 20F55, 68R15.

\noindent\textbf{Keywords.}
Free group, palindrome, palindromic length, Coxeter group, reflection length,
noncrossing matching, Catalan tree.

\section{Introduction}

Let $F=F(X)$ be a free group of finite rank, with a basis $X$ fixed
throughout.  A reduced word in $X^{\pm1}$ is a \emph{palindrome} if it reads
the same from left to right and from right to left.  The palindromic length
$\pl(g)$ of $g\in F$ is the least number of palindromes whose product is
$g$.  Unlike palindromic factorization in a free monoid, this is intrinsically
a group-theoretic problem: cancellation between successive factors may create
palindromes which are not visible as subwords of the reduced representative.

Bardakov, Shpilrain and Tolstykh proved that every nonabelian finitely
generated free group has infinite palindromic width \cite{BST}.  In the same
paper they asked whether, given $g\in F$, its palindromic length can be
computed; this is their Problem~2 \cite[Problem~2]{BST}.  The question is
also recorded as Problem~16.9 in the Kourovka Notebook and is still listed in
the current 21st edition \cite[Problem~16.9]{Kourovka}.  More recently, Frid
gave complete descriptions of the reduced words of palindromic length at most
two and at most three \cite[Theorems~1 and~2]{Frid}.  She left the
corresponding problem for larger lengths open, proposed five forms for four
palindromes, and suggested a Catalan--Dyck pattern \cite[Section~5]{Frid}.

There are two earlier observations which point toward Coxeter groups.  If
$\theta\in\operatorname{Aut}(F)$ inverts every basis element, then
Bardakov--Shpilrain--Tolstykh observed that a reduced word $p$ is a palindrome
if and only if $\theta(p)=p^{-1}$ \cite[Lemma~1.4]{BST}.  In the rank-two once-punctured-torus setting,
Kim--Koberda--Lee--Ohshika--Tan--Gao used an index-two embedding of $F_2$ in
the universal Coxeter group on three generators and showed that their
palindromic elements are precisely products of two involutions, one of them
simple \cite[Lemma~4.1]{KimEtAl}.  For a fixed two-element basis, the same
calculation gives the corresponding fixed-basis statement.  Universal Coxeter
groups also occur in the study of palindromic automorphisms; see
Piggott--Ruane \cite{PiggottRuane}.

We use this involutive viewpoint to pass from individual palindromes to the
entire palindromic metric.  Form
\[
        W=F\rtimes_{\theta}\langle t\mid t^2=1\rangle.
\]
Writing $s_0=t$ and $s_x=xt$ for $x\in X$ gives
\[
        W\cong \mathop{\ast}_{\{0\}\sqcup X} C_2.
\]
Let $T$ be the set of reflections of this Coxeter group.  We prove the exact
identity
\begin{equation}\label{eq:intro-length}
        \pl(g)=\min\{\ell_T(g),\ell_T(gt)\}.
\end{equation}
Thus palindromic length is obtained by measuring two adjacent lifts of $g$ in
an index-two Coxeter extension.

Dyer's deletion theorem identifies reflection length with the minimum number
of simple reflections which must be deleted from a fixed reduced Coxeter
expression in order that the remaining subword represent the identity
\cite{Dyer}.  In a universal Coxeter group, an identity subword is equivalent
to a noncrossing perfect matching of its positions by equal labels.  Hence
$\ell_T$ is the minimum number of unmatched positions in a noncrossing
equal-label partial matching.  The interval recurrence for this statistic has
$O(n^2)$ states and $O(n)$ transitions per state.  We consequently obtain an
$O(n^3)$-time, $O(n^2)$-space algorithm for palindromic length, together with
an optimal factorization.  In particular, \eqref{eq:intro-length} and the
matching recurrence answer the algorithmic problem above.

The matching model retains considerably more information than the length
alone.  Every matched arc in a reduced universal-Coxeter word surrounds an
unmatched position.  The leaf intervals enclosed by the arcs therefore form
a laminar family.  After grouping arcs with the same leaf interval and
refining vertices of valence greater than two, the original Coxeter word is
recovered letter for letter as the contour of an ordered full binary tree.
A two-state transducer from the Coxeter extension back to $F$ preserves these
block boundaries literally.  Odd Coxeter-palindrome leaf blocks project to
free-group palindromes, while an internal label enclosing $r$ leaves returns
as its inverse when $r$ is even and as its reversal when $r$ is odd.

This gives the second main result.  For every ordered full binary tree
$\mathcal T$ with $k$ leaves we define a Catalan template
$\mathcal C(\mathcal T)$.  We prove that a reduced word $w$ represents an
element of palindromic length at most $k$ if and only if $w$ is a literal
instance of $\mathcal C(\mathcal T)$ for some such tree.  Consequently the
$C_{k-1}$ ordered tree shapes form a complete finite family of reduced-word
templates.  For $k=2$ and $k=3$ these specialize to Frid's theorems; for
$k=4$ they give exactly the five forms proposed there.  Thus the same argument
establishes the completeness of the four-palindrome list and proves the
Catalan pattern for arbitrary fixed $k$.

\paragraph{Structure of the proof.}
The proof is organized through the following sequence of representations.
Starting from the reduced free word $w$ of an element $g$, we pass through:
\[
\begin{aligned}
\boxed{w\text{ in }F}
&\longrightarrow
\boxed{gt^{\eps}\text{ in }W}
\longrightarrow
\boxed{\text{reduced Coxeter word }v}\\
&\longrightarrow
\boxed{\text{noncrossing matching}}
\longrightarrow
\boxed{\text{binary contour}}
\longrightarrow
\boxed{\text{Catalan template}}.
\end{aligned}
\]
Here $\eps\in\{0,1\}$ is chosen according to the parity of the desired
palindromic bound.  The first arrow converts palindromes into reflections.
The second and third arrows use universal-Coxeter normal form and Dyer's
deletion theorem.  The unmatched positions of the matching become the
leaves of the contour tree.  A two-state projection then returns from the
Coxeter word to the original reduced free word without introducing hidden
cancellation.  Finally, the Catalan template itself can be expanded back into
the same number of palindromes.  Thus the construction is reversible at the
level needed for the main theorem.

For $k=4$, the argument specializes to:
\[
\pl(g)\le4
\Longrightarrow
m\in\{0,2,4\}
\Longrightarrow
\text{a four-leaf Catalan template}
\Longrightarrow
\text{one of five tree shapes}.
\]
The parity restriction $m\in\{0,2,4\}$ will be proved explicitly below; the
five tree shapes are the five ordered full binary trees with four leaves.

\paragraph{Lean verification.}
A companion Lean 4 development, included with the arXiv submission as
ancillary material, verifies the four-palindrome classification end to end.
Its final theorem is stated for every finite rank $n$ and every reduced word
over the basis of \texttt{FreeGroup (Fin n)}; the left-hand side is ordinary
palindromic length in the free group, and the right-hand side is literal
membership in one of the five displayed forms.  The development also checks
the palindrome/reflection bridge, the universal-Coxeter reflection/matching
equivalence, the matching-to-Catalan construction, the projection interfaces,
and the literal Catalan semantics used in the proof.  It contains no proof
placeholders or additional mathematical axiom declarations.  The new
mathematical arguments are given in full below; the principal external input
is Dyer's deletion theorem, together with standard normal-form facts for free
products.  We do not claim formal verification of the asymptotic complexity
analysis of the algorithm.

\subsection*{Notation and background}
We recall the terminology and conventions needed for the proof.  This also
fixes the distinction between equality of group elements and literal equality
of reduced words, which is essential in Sections~5--7.

\begin{itemize}[leftmargin=2em]
\item A \emph{word} is a finite string in symbols $x^{\pm1}$ with $x\in X$.
Two neighboring letters $xx^{-1}$ or $x^{-1}x$ may be cancelled.  A word is
\emph{freely reduced} if no such adjacent cancellation is possible.  Every
element of a free group has a unique freely reduced representative.

\item A \emph{palindrome} is a reduced word that is unchanged by reversal.
For example, $ab^{-1}a$ is a palindrome, whereas $ab^{-1}a^{-1}$ is not.
The product of two palindromes is a group product: after concatenation,
letters at the boundary may cancel.  This is why palindromic length is not
just a string-partition problem.

\item A \emph{universal Coxeter group} is a free product of copies of $C_2$.
Its generators $s_i$ satisfy only $s_i^2=1$.  Thus a Coxeter word is reduced
exactly when no two adjacent labels are equal.

\item A \emph{reflection} is a conjugate $usu^{-1}$ of a Coxeter generator.
Reflection length $\ell_T(h)$ is the smallest number of reflections whose
product is $h$.

\item A \emph{noncrossing matching} pairs some positions of a word by arcs,
with equal labels at the two ends and with no two arcs crossing.  Positions
that are not endpoints of arcs are called \emph{unmatched}.

\item An \emph{ordered full binary tree} is a rooted tree in which every
internal vertex has exactly two ordered children.  Such trees with $k$
leaves are counted by the Catalan number $C_{k-1}$.
\end{itemize}

The proof rests on the following correspondences:
\[
\boxed{\text{palindrome factors}}
\longleftrightarrow
\boxed{\text{reflections}}
\longleftrightarrow
\boxed{\text{unmatched positions}}
\longleftrightarrow
\boxed{\text{tree leaves}}.
\]
Every technical section makes one of these equivalences precise.

Saarela's comparison of palindromic length in free monoids and free groups
provides useful background on the effect of cancellation \cite{Saarela}.
The role of the Coxeter extension here is different from the earlier uses
cited above: the crucial step is the metric identity
\eqref{eq:intro-length}, followed by the deletion/matching interpretation and
the resulting contour normal form.

The paper is organized as follows.  Section~2 establishes the
palindrome--reflection correspondence and the exact length identity.
Section~3 develops the noncrossing matching model, its parity property, and
the cubic-time algorithm.  Section~4 proves the literal contour
decomposition.  Section~5 constructs the two-state projection and explains
how a reduced Coxeter word recovers the original free word.  Section~6 proves
the Catalan template characterization in both directions.  Section~7 treats
the four-palindrome specialization and explains why exactly five forms
occur.  Section~8 gives an example, a proof-chain summary, and further
remarks.

\section{Palindromes and reflections}

\paragraph{Overview.}
A product of palindromes is difficult to read directly from a reduced free
word because neighboring factors may cancel.  The purpose of this section is
to replace each palindrome by a reflection in a slightly larger group.  In
that larger group the relevant length is reflection length, for which Coxeter
theory provides a deletion theorem.  The only new symbol is an involution
$t$; it records a parity state and does not add any new freedom to the free
group element itself.

Let $X^{\pm1}=X\sqcup X^{-1}$.  If
$u=a_1\cdots a_m$ is a word over $X^{\pm1}$, write
\[
        \rev{u}=a_m\cdots a_1
\]
for the reversal of $u$.  Thus $u^{-1}=a_m^{-1}\cdots a_1^{-1}$, and
reversal should not be confused with inversion.  A reduced word $u$ is a
palindrome if $u=\rev{u}$.

For example, if $u=ab^{-1}c$, then
\[
   \rev u=cb^{-1}a,
   \qquad
   u^{-1}=c^{-1}ba^{-1}.
\]
Reversal changes the order but not the signs; inversion changes both.  This
distinction is responsible later for the two different return operations in
Catalan templates.

Let $\theta$ be the automorphism of $F$ defined by
$\theta(x)=x^{-1}$ for every $x\in X$.  On words,
\[
        \theta(u)=\rev{u}^{\,-1}.
\]
Consider
\[
        W=F\rtimes_\theta \langle t\mid t^2=1\rangle,
        \qquad tgt=\theta(g) \quad(g\in F).
\]
For $x\in X$, put $s_x=xt$ and put $s_0=t$.  Then every $s_i$ is an
involution and
\[
        W=\langle s_0,(s_x)_{x\in X}\mid s_i^2=1\rangle
        \cong \mathop{\ast}_{\{0\}\sqcup X} C_2.
\]
Conversely, $x=s_xs_0$, so the even subgroup is naturally identified with
$F$.

\paragraph{Coxeter encoding of a reduced free word.}

The conversion can be performed mechanically.  Replace each positive letter
$x$ by $s_xs_0$ and each negative letter $x^{-1}$ by $s_0s_x$, concatenate,
and then cancel adjacent equal Coxeter generators.  Because the only Coxeter
relations are $s_i^2=1$, this gives the unique reduced Coxeter representative.
If one wants a representative of $gt$ instead of $g$, append one more $s_0$
before the final Coxeter reduction.

We denote this encoding by $\kappa_\eps$, where
$\eps\in\{0,1\}$ records whether we encode $g$ or $gt$.  Section~5 proves
that the two-state projection recovers the unique reduced free word from
$\kappa_\eps(w)$; thus no information about the original word is lost.

For example, if $w=ab^{-1}a$, then
\[
 (s_as_0)(s_0s_b)(s_as_0)
   \;\longrightarrow\;
 s_as_bs_as_0.
\]
Later the two-state projection will read this reduced Coxeter word back as
$a b^{-1}a$.  The general construction is the same.

Let $S=\{s_0\}\cup\{s_x:x\in X\}$ and
\[
        T=\{wsw^{-1}:w\in W,\ s\in S\}
\]
be the set of reflections.  We write $\ell_T$ for word length with respect
to $T$.

The equivalence between palindromes and elements inverted by $\theta$ is
\cite[Lemma~1.4]{BST}.  A closely related rank-two Coxeter-extension
formulation appears in \cite[Lemma~4.1]{KimEtAl}.  We record the version
needed here because its coset formulation will be used repeatedly in the
length argument.

The next lemma is the conceptual hinge of the paper.  It says that one
palindrome in the free group is exactly one reflection after adjoining the
parity letter $t$.

\begin{lemma}[Palindrome--reflection correspondence]\label{lem:pal-ref}
The map
\[
        p\longmapsto pt
\]
is a bijection from the set of palindromes in $F$ (including the identity)
to the set $T$ of reflections in $W$.
\end{lemma}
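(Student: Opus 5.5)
The plan is to characterize reflections inside $W$ purely in terms of the coset $Ft$ and the automorphism $\theta$. Then we combine this with the fact, recorded in \cite[Lemma~1.4]{BST}, that a reduced word $p$ is a palindrome if and only if $\theta(p)=p^{-1}$. Three observations carry the argument. First, every reflection lies in the odd coset $Ft$: each generator $s_i$ is odd under the homomorphism $W\to\langle t\rangle$ killing $F$, and conjugation preserves parity. Second, an element $gt$ with $g\in F$ is an involution exactly when $gtgt=g\theta(g)=1$, that is, when $\theta(g)=g^{-1}$. Third, in a free product of copies of $C_2$, every involution is conjugate to a generator. This is the standard torsion theorem for free products: every finite-order element is conjugate into a factor, and the only nontrivial element of a factor is $s_i$.

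The proof then proceeds as follows. Suppose $p$ is a palindrome. Since $\theta(p)=\rev p^{\,-1}=p^{-1}$, the second observation shows $(pt)^2=1$. Also $pt\neq 1$ because it is odd. By the third observation, $pt$ is conjugate to some $s_i$, so $pt\in T$. Conversely, let $r\in T$. Then $r$ is odd, so $r=gt$ for a unique $g\in F$. Since $r^2=1$, we get $\theta(g)=g^{-1}$, so the reduced word $u$ of $g$ satisfies $\rev u^{\,-1}=u^{-1}$ as elements of $F$. Both $\rev u$ and $u$ are reduced words (reversal preserves reducedness), so uniqueness of reduced representatives gives $\rev u=u$, and $u$ is a palindrome. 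Injectivity is immediate from right cancellation of $t$. The identity palindrome maps to $t=s_0\in T$, which is consistent with the statement.

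The main obstacle is the third observation, namely that involutions in $W$ are reflections. If I prefer to avoid citing the Kurosh-type torsion theorem, I would argue directly with reduced Coxeter words. Write an involution as a cyclically reduced conjugate: take a reduced word $v$ and conjugate away matching end letters $v=s\,v'\,s$ to shorten it. Any cyclically reduced word of length at least $2$ in a universal Coxeter group has infinite order, since its powers concatenate without cancellation. So the shortened form has length $1$ (length $0$ being impossible since the element is odd), and the original element is a conjugate of some $s_i$. This is short, so I expect no genuine difficulty. The one point requiring care is keeping literal word equality ($\rev u=u$) separate from group equality in the converse direction, which is exactly where reducedness of $\rev u$ is used.
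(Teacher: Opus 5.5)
Your proof is correct and follows the paper's argument: parity places every reflection in the odd coset $Ft$, the identity $(pt)^2=p\theta(p)$ reduces involutivity to $\theta(p)=p^{-1}$, and the torsion theorem for free products identifies nontrivial involutions of $W$ with reflections. The only difference is that you prove two ingredients the paper cites. You derive the palindrome criterion from $\theta(u)=\rev{u}^{\,-1}$ and uniqueness of reduced words, and you show involutions are reflections by cyclic reduction; both arguments are valid.
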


\begin{proof}
For $p\in F$,
\[
        (pt)^2=p\theta(p).
\]
Hence $(pt)^2=1$ if and only if $\theta(p)=p^{-1}$, which is equivalent to
the reduced word for $p$ being a palindrome by \cite[Lemma~1.4]{BST}.  Since $pt$ lies in the odd
coset, it is nontrivial.  A nontrivial finite-order element of a free
product is conjugate into a free factor; as every free factor of $W$ has
order two, every nontrivial involution is a reflection.  Thus a palindrome
$p$ gives a reflection $pt$.

Conversely, every reflection is an involution and lies in the odd coset:
the parity homomorphism $W\to C_2$ sends each simple reflection to the
nontrivial element and is invariant under conjugation.  Hence a reflection
has a unique form $qt$ with $q\in F$.  From $(qt)^2=1$ we obtain
$\theta(q)=q^{-1}$, so the reduced word for $q$ is a palindrome.  Uniqueness
of the normal form $qt$ proves bijectivity.
\end{proof}

The following form of the length correspondence is useful because it keeps
track of parity.  The identity palindrome is important here: it lets us pad a
factorization by one factor at a time on the free-group side, while a pair
$(t)(t)$ pads a reflection factorization by two factors on the Coxeter side.

\begin{proposition}\label{prop:bounded}
For every $g\in F$ and every integer $k\ge 0$,
\[
        \pl(g)\le k
        \quad\Longleftrightarrow\quad
        \ell_T\bigl(gt^{k\bmod2}\bigr)\le k.
\]
\end{proposition}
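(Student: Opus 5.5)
Both directions come from rewriting a product of palindromes as a product of reflections via Lemma~\ref{lem:pal-ref}, using the relations $tpt=\theta(p)$ and $t^2=1$. We also need two facts about reflections: a reflection times $t$ is a palindrome with no $t$, and the parity homomorphism $W\to C_2$ controls how many reflections can occur.

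\emph{Forward direction.} Suppose $\pl(g)\le k$. Since the identity counts as a palindrome, we may pad and write $g=p_1p_2\cdots p_k$ with exactly $k$ palindromes. Each $r_i=p_it$ is a reflection. I will alternate: use $p_it$ in the odd slots and $tp_i$ in the even slots. The element $tp_i=t(p_it)t$ is a conjugate of a reflection, hence a reflection. In the product
\[
(p_1t)(tp_2)(p_3t)(tp_4)\cdots,
\]
adjacent $t$'s cancel in pairs. The product of the $k$ factors therefore equals $g$ when $k$ is even and $gt$ when $k$ is odd, that is, $gt^{k\bmod 2}$. Hence $\ell_T(gt^{k\bmod2})\le k$.

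\emph{Converse.} Suppose $\ell_T(gt^{k\bmod 2})\le k$, say $gt^{\eps}=r_1\cdots r_m$ with $m\le k$ and $\eps=k\bmod 2$. Every reflection lies in the odd coset, so the parity homomorphism forces $m\equiv\eps\equiv k\pmod 2$. Using the unique normal forms from Lemma~\ref{lem:pal-ref}, write each factor as $r_i=q_it$ with $q_i$ a palindrome. Then
\[
(q_1t)(q_2t)(q_3t)\cdots
=q_1\,\theta(q_2)\,q_3\,\theta(q_4)\cdots t^{m},
\]
obtained by moving each $t$ rightward with $tq=\theta(q)t$.

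Two observations finish the argument. First, $\theta$ maps palindromes to palindromes, because $\theta(p)=\rev{p}^{\,-1}=p^{-1}$ and the inverse of a palindrome is a palindrome. Second, $t^m=t^{\eps}$ because $m\equiv\eps\pmod 2$. Cancelling $t^\eps$ gives $g$ as a product of $m\le k$ palindromes, so $\pl(g)\le k$.

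The only delicate point is the parity bookkeeping: the padding by identity palindromes in the forward direction, and the parity argument in the converse showing that $m$ has the same parity as $k$. The rest is routine rearrangement with $tpt=\theta(p)$.
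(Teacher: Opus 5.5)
Your proof is correct and follows the paper's argument. Your even-slot factors $tp_i$ are the same elements as the paper's $\theta(p_i)t$, and in the converse you skip the paper's $(t)(t)$ padding and read off $m\le k$ palindromes directly, which works equally well.
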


\begin{proof}
Suppose first that $g=p_1\cdots p_r$ with $r\le k$ and each $p_i$ a
palindrome.  Inserting identity palindromes if necessary, assume $r=k$.
Since $\theta$ preserves palindromes,
\[
 gt^{k\bmod2}
   =(p_1t)(\theta(p_2)t)(p_3t)(\theta(p_4)t)\cdots,
\]
and every factor on the right is a reflection by
Lemma~\ref{lem:pal-ref}.  Thus the reflection length is at most $k$.

Conversely, put $h=gt^{k\bmod2}$ and suppose $\ell_T(h)=m\le k$.
Every reflection lies in the odd coset $Ft$, so $m\equiv k\pmod2$.
After inserting $(t)(t)$ pairs we may therefore write $h$ as a product of
exactly $k$ reflections.  By Lemma~\ref{lem:pal-ref}, write those factors as
$q_it$ with each $q_i$ a palindrome.  Multiplying in the semidirect product
gives
\[
        g=q_1\theta(q_2)q_3\theta(q_4)\cdots,
\]
a product of $k$ palindromes.
\end{proof}

\begin{theorem}\label{thm:length-formula}
For every $g\in F$,
\[
        \boxed{\displaystyle
        \pl(g)=\min\{\ell_T(g),\ell_T(gt)\}.}
\]
\end{theorem}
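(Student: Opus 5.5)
The plan is to derive the identity directly from Proposition~\ref{prop:bounded}, proving the two inequalities separately and using only the parity fact already observed in its proof.

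\emph{Upper bound.} Put $m=\min\{\ell_T(g),\ell_T(gt)\}$; this is finite because $\ell_T$ is finite on all of $W$, as $S\subseteq T$. Choose $\eps\in\{0,1\}$ with $\ell_T(gt^{\eps})=m$. Every reflection lies in the odd coset $Ft$, so a product of $m$ reflections lies in $Ft^{m\bmod 2}$. Since $g\in F$, the element $gt^{\eps}$ lies in $Ft^{\eps}$, which forces $\eps\equiv m\pmod 2$. Hence $\ell_T(gt^{m\bmod2})\le m$, and Proposition~\ref{prop:bounded} with $k=m$ gives $\pl(g)\le m$.

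\emph{Lower bound.} Let $k=\pl(g)$, which is finite since each letter $x^{\pm1}$ is itself a palindrome. Proposition~\ref{prop:bounded} gives $\ell_T(gt^{k\bmod2})\le k$, and therefore $m\le k=\pl(g)$.

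Neither step is a serious obstacle. The only point needing care is the parity matching in the upper bound: one must check that the lift achieving the minimum is the one with exponent $m\bmod 2$. That lift is forced by the coset argument, because $\ell_T(h)\equiv \eps\pmod2$ for every $h\in Ft^{\eps}$. Without this, Proposition~\ref{prop:bounded} would not apply directly.
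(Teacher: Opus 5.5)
Your proof is correct and takes essentially the same approach as the paper. Both inequalities come from Proposition~\ref{prop:bounded}, and the coset-parity fact (that $\ell_T$ is even on $F$ and odd on $Ft$) is what lets the proposition apply; the paper applies it to both $k=\ell_T(g)$ and $k=\ell_T(gt)$ instead of only to the minimizing lift, an inessential difference.
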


\begin{proof}
Apply Proposition~\ref{prop:bounded} with $k=\ell_T(g)$ and with
$k=\ell_T(gt)$, noting that $\ell_T(g)$ is even and $\ell_T(gt)$ is odd.
This gives
$\pl(g)\le\min\{\ell_T(g),\ell_T(gt)\}$.  Conversely, if
$k=\pl(g)$, Proposition~\ref{prop:bounded} gives
$\ell_T(gt^{k\bmod2})\le k$, proving the reverse inequality.
\end{proof}

\paragraph{Consequence for the sequel.}
The free-group problem has now been converted into a Coxeter-length problem.
For a bound $k$, we do not need to guess a palindrome factorization directly:
we study the single Coxeter element $gt^{k\bmod2}$ and ask whether its
reflection length is at most $k$.  All later sections work on a reduced
Coxeter word for this element.

\section{Noncrossing matchings and a cubic-time algorithm}

\paragraph{Overview.}
Dyer's theorem asks how many letters of a reduced Coxeter expression must be
deleted so that the surviving subword represents the identity.  In a
universal Coxeter group, identity reduction consists only of cancelling equal
letters in pairs.  Recording these cancellations by arcs above the word turns
the deletion problem into a noncrossing matching problem.  Deleted letters
are precisely the unmatched positions.  This translation is the reason a
group-theoretic length problem becomes a finite dynamic program.

Let
\[
        v=v_1\cdots v_N,\qquad v_i\in S,
\]
be a reduced word in the universal Coxeter group $W$; thus
$v_i\ne v_{i+1}$ for all $i$.  A \emph{noncrossing equal-label partial
matching} on $v$ is a collection of disjoint pairs $(i,j)$, $i<j$, such
that $v_i=v_j$ and no two pairs cross: there are no
$i<k<j<\ell$ with both $(i,j)$ and $(k,\ell)$ in the matching.  Write
$\delta(v)$ for the minimum number of unmatched positions.

For example, consider the reduced Coxeter word
\[
       v=s_as_bs_a.
\]
The first and third positions may be paired, leaving only the middle
$s_b$ unmatched.  Hence this particular matching has one unmatched position,
and in fact $\delta(v)=1$.  The group element represented by $v$ is therefore
a single reflection.  The example illustrates the dictionary used below:
an unmatched letter contributes one reflection factor.

The following elementary parity observation is used later to pass from an
``at most $k$'' matching to a template with exactly $k$ leaves.

\begin{lemma}[Parity of unmatched positions]\label{lem:matching-parity}
If a word of length $N$ carries a partial matching with $d$ unmatched
positions, then
\[
        d\equiv N\pmod2.
\]
\end{lemma}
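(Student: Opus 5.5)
The statement is pure counting, so I would prove it directly from the definition of a partial matching, without using the group structure or the noncrossing condition. A partial matching on $v_1\cdots v_N$ is a set $M$ of pairwise disjoint pairs $(i,j)$ with $1\le i<j\le N$. The positions $\{1,\dots,N\}$ then split into two disjoint parts. The first part is the union of the pairs in $M$. The second part is the set $U$ of unmatched positions, with $|U|=d$.

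The key step is the count of matched positions. Because the pairs are disjoint and each has exactly two distinct elements $i\ne j$, the matched positions number exactly $2|M|$. Hence
\[
        N = 2|M| + d,
\]
so $N-d=2|M|$ is even, which gives $d\equiv N\pmod 2$.

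I see no real obstacle here. The only point that needs stating is that the pairs in a matching are disjoint and consist of distinct positions, so no position is counted twice. The equal-label condition and the noncrossing condition play no role.

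I would end by recording the consequence that will be used later. Applied to a reduced word for $gt^{\eps}$, every matching has $d\equiv N$. This agrees with the coset parity used in Proposition~\ref{prop:bounded}: the number of reflection factors has the parity of $\eps$.
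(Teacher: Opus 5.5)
Your proof is correct and is the paper's argument: the $q$ disjoint pairs cover exactly $2q$ positions, so $N=2q+d$. Your closing remark that $d\equiv\eps\pmod 2$ for a reduced word of $gt^{\eps}$ also matches how the paper uses the lemma in the proof of Theorem~\ref{thm:catalan}, where it guarantees that $k-m$ is even.
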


\begin{proof}
If the matching contains $q$ pairs, then exactly $2q$ positions are matched,
so $N=2q+d$.
\end{proof}

\begin{lemma}\label{lem:identity-matching}
A subword of $v$ represents the identity in $W$ if and only if its selected
positions admit a noncrossing perfect matching by equal labels.
\end{lemma}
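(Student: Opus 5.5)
We prove both directions by induction on the number $M$ of selected positions. Write the selected subword as $u=u_1\cdots u_M$ with $u_r\in S$. Here $u$ need not be reduced, even though $v$ is: deleting letters from $v$ can make equal labels adjacent. We use only the free-product normal form of $W$, namely that a word in the $s_i$ represents the identity if and only if it reduces to the empty word by repeatedly cancelling adjacent equal letters. This holds because $W\cong\ast C_2$, and words with no two adjacent equal letters are reduced and represent distinct elements.

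\emph{Matching implies identity.} Suppose the selected positions carry a noncrossing perfect matching by equal labels, and take $M>0$. Choose an innermost arc $(i,j)$, one that encloses no other arc. Any selected position strictly between $i$ and $j$ would have to be matched to something. Its partner cannot lie inside the arc, since that arc would then be enclosed by $(i,j)$. It cannot lie outside either, since the two arcs would then cross. Hence $u_i$ and $u_j$ are adjacent in $u$, and they carry the same label $s$. The factor $ss=1$ cancels, and the remaining arcs form a noncrossing perfect matching of the shorter word. By induction that word represents $1$, and therefore so does $u$.

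\emph{Identity implies matching.} Suppose $u$ represents $1$ and $M>0$. By the normal form, $u$ is not reduced, so some adjacent pair $u_r=u_{r+1}$ exists. Deleting this pair gives a shorter word that still represents $1$. By induction it carries a noncrossing perfect matching. Add the arc joining the original positions of $u_r$ and $u_{r+1}$. No selected position lies strictly between them, so this new arc cannot cross any existing arc. The result is a noncrossing perfect matching of $u$ by equal labels. The base case $M=0$ is immediate: the empty word represents the identity and carries the empty matching.

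The only point needing care is the normal-form fact for universal Coxeter groups, and it is standard for free products, as noted in the Notation section. One should also record that "adjacent" always means adjacent within the selected subword $u$, not within $v$. The reducedness of $v$ plays no role in this lemma; it matters only later, in the contour analysis.
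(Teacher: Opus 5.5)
Your proof is correct and follows essentially the same route as the paper: the paper likewise builds the matching by recording the adjacent cancellations in a reduction to the empty word (your induction that deletes an adjacent equal pair and then adds its arc just makes ``reading the cancellations in reverse'' explicit). For the converse the paper, like you, cancels an innermost matched pair, which is adjacent in the selected subword, and inducts on the number of pairs.
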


\begin{proof}
If the subword reduces to the empty word, record each cancellation of two
adjacent equal letters.  Reading the cancellations in reverse produces a
noncrossing perfect matching of equal letters.  Conversely, a noncrossing
perfect matching contains an innermost matched pair.  In the selected
subword its two letters are adjacent and equal, hence cancel.  Induction on
the number of matched pairs completes the proof.
\end{proof}

\begin{lemma}[Matching-to-reflections]\label{lem:matching-reflections}
If a Coxeter word carries a noncrossing equal-label partial matching with
$d$ unmatched positions, then the represented group element is a product of
at most $d$ reflections.  Given the matching, such a factorization can be
constructed recursively.
\end{lemma}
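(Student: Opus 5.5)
We argue by strong induction on the length $N$ of the Coxeter word $v=v_1\cdots v_N$ (reducedness is not needed here). The induction also gives the recursive construction. The guiding idea is that a matched pair $(i,j)$ with $v_i=v_j=s$ acts as a conjugation: the enclosed block $u=v_{i+1}\cdots v_{j-1}$ contributes $sus$. Since $T$ is closed under conjugation, a factorization of $u$ into reflections conjugates to one of $sus$ with the same number of factors. The base case $N=0$ is the empty word, representing $1$ as a product of $0\le d$ reflections.

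For the inductive step, look at position $1$.

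\emph{Case 1: position $1$ is unmatched.} Write $v=v_1v'$. The restriction of the matching to $v'$ is still noncrossing, equal-label, and has $d-1$ unmatched positions. By induction $v'$ is a product of at most $d-1$ reflections, and $v_1\in S\subseteq T$ supplies one more factor.

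\emph{Case 2: position $1$ is matched to some $j$.} Write $v=(v_1\,u\,v_j)\,v''$ with $u=v_2\cdots v_{j-1}$ and $v''=v_{j+1}\cdots v_N$. Because the matching is noncrossing, every arc with an endpoint strictly between $1$ and $j$ has both endpoints there. So the matching splits into three parts: a matching on $u$ with $d_1$ unmatched positions, the arc $(1,j)$, and a matching on $v''$ with $d_2$ unmatched positions, where $d_1+d_2=d$. Induction gives
\[
u=r_1\cdots r_a \ (a\le d_1), \qquad v''=r'_1\cdots r'_b \ (b\le d_2),
\]
with all $r_i,r'_i\in T$. Since $v_1=v_j=s$,
\[
v_1uv_j=(sr_1s)\cdots(sr_as),
\]
and each $sr_is$ is a reflection. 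Concatenating gives at most $d_1+d_2=d$ reflections.

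This case analysis is exactly the recursive construction claimed in the statement. It can also be phrased as a recursion on intervals $[i,j]$, which matches the later dynamic program. Explicitly, each unmatched position $p$ yields the reflection $c\,v_p\,c^{-1}$, where $c$ is the product of the labels of the arcs enclosing $p$, read from outside in.

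The only step needing care is the decomposition in Case 2: checking that noncrossingness forces the matching to restrict to $u$ and to $v''$ separately, so that the unmatched counts add to $d$. Everything else is bookkeeping.
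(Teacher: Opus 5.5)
Your proof is correct and follows the paper's argument: induction on word length, splitting on whether the first position is unmatched (peel off $v_1\in S\subseteq T$) or matched to some $j$ (noncrossing separates $u$ from $v''$, and $[s\,u\,s]=\prod_i (s r_i s)$ because $T$ is closed under conjugation). Your closing formula $c\,v_p\,c^{-1}$ for the factor attached to an unmatched position $p$ correctly unrolls this recursion; it is a useful addition but not needed for the lemma.
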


\begin{proof}
Induct on the length of the word.  If the first position is unmatched, write
the word as $sz$.  The restricted matching on $z$ has $d-1$ unmatched
positions, so by induction $[z]$ is a product of at most $d-1$ reflections.
Since $s$ is itself a reflection, $[sz]$ is a product of at most $d$
reflections.

Otherwise the first position is matched with a later occurrence of the same
letter, and the word has the form
\[
        s\,u\,s\,z.
\]
Noncrossing implies that the matching restricts independently to $u$ and
$z$.  If these restrictions have $d_1$ and $d_2$ unmatched positions, then
$d_1+d_2=d$.  By induction write
\[
        [u]=r_1\cdots r_{d_1},\qquad
        [z]=q_1\cdots q_{d_2},
\]
allowing fewer factors if necessary.  Since reflections are closed under
conjugation,
\[
        [s u s]
        =(s r_1s)\cdots(s r_{d_1}s)
\]
is a product of at most $d_1$ reflections.  Appending the factorization of
$[z]$ gives the required factorization of the whole word.  The same
induction is an explicit reconstruction procedure.
\end{proof}

Dyer's deletion theorem now has a particularly simple consequence.

\begin{proposition}\label{prop:delta-reflection}
For a reduced universal Coxeter word $v$ representing $w\in W$,
\[
        \boxed{\delta(v)=\ell_T(w).}
\]
\end{proposition}

\begin{proof}
By Dyer's theorem \cite{Dyer}, $\ell_T(w)$ is the least number of letters
which must be deleted from the reduced expression $v$ so that the remaining
subword represents the identity.  By Lemma~\ref{lem:identity-matching},
keeping a subword which represents the identity is equivalent to pairing all
of its positions by a noncrossing equal-label matching.  The deleted
positions are exactly the unmatched positions.
\end{proof}

Thus there is no loss of information in replacing reflection length by the
minimum number of unmatched positions.  In particular, a witness matching
simultaneously provides an upper bound and, through Dyer's theorem, certifies
that no shorter reflection factorization exists.

The statistic $\delta$ is computed by a standard interval recurrence.  For
a half-open interval $[i,j)$, let $D(i,j)$ be the minimum number of
unmatched positions among positions $i,i+1,\ldots,j-1$.  Then
\begin{equation}\label{eq:dp}
D(i,j)=\min\left\{
1+D(i+1,j),
\min_{\substack{i<r<j\\v_r=v_i}}
\bigl(D(i+1,r)+D(r+1,j)\bigr)
\right\}.
\end{equation}
Indeed, position $i$ is either left unmatched or is matched with some
position $r$ carrying the same label; noncrossing then separates the two
remaining intervals.

\begin{theorem}[Algorithmic palindromic length]\label{thm:algorithm}
Let $F$ be a finitely generated free group with fixed basis, let $g\in F$,
and let $w$ be the reduced word of $g$, of length $n$.  Then $\pl(g)$ can be
computed in time $O(n^3)$ and space $O(n^2)$.  An optimal palindromic
factorization of $g$ can be recovered within the same asymptotic bounds.
\end{theorem}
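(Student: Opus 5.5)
The algorithm has three phases: encode, run the dynamic program \eqref{eq:dp} twice, and decode.

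\emph{Encoding.} Given the reduced word $w$ of length $n$, I compute the reduced Coxeter words $v^{(0)}=\kappa_0(w)$ and $v^{(1)}=\kappa_1(w)$. Each letter of $w$ contributes two Coxeter letters, and one extra $s_0$ is appended for $\eps=1$. Free reduction of adjacent equal pairs is done with a stack in linear time. Hence both words have length $N\le 2n+1=O(n)$.

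\emph{Dynamic program.} By Theorem~\ref{thm:length-formula} and Proposition~\ref{prop:delta-reflection},
\[
\pl(g)=\min\{\delta(v^{(0)}),\delta(v^{(1)})\},
\]
so it suffices to compute $\delta$ for a reduced word of length $N$. I will fill the table $D(i,j)$, $1\le i\le j\le N+1$, by increasing interval length, starting from $D(i,i)=0$. Correctness of \eqref{eq:dp} is a short case split. Either position $i$ is unmatched, or it is matched to some $r$ with $v_r=v_i$. In the second case, noncrossing forbids any arc from $(i,r)$ to $[r+1,j)$. So the optimal matching restricts to independent optimal matchings on $[i+1,r)$ and $[r+1,j)$. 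Conversely, any such pair of matchings together with the arc $(i,r)$ is a valid noncrossing matching. This gives $\delta(v)=D(1,N+1)$. There are $O(N^2)$ states and each takes an $O(N)$ minimum, so the time is $O(N^3)=O(n^3)$ and the space is $O(N^2)=O(n^2)$. The rank of $F$ only affects label comparisons, which are constant time.

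\emph{Recovering a factorization.} I store an argmin choice for each state; this can also be recomputed on the fly. Tracing back from the better $\eps$ produces an optimal matching $M$ in $O(n^2)$ time. Lemma~\ref{lem:matching-reflections} turns $M$ into reflections by a recursion. Unmatched letters become simple reflections $s$. An arc $s\,u\,s$ conjugates by $s$ every reflection produced inside $u$. Because conjugation is applied through many nesting levels, I represent each reflection as $c\,s\,c^{-1}$, where $c$ is the prefix of the Coxeter word read off from the enclosing arcs. Concretely, the reflection attached to an unmatched position $p$ is $\pi_p\, v_p\, \pi_p^{-1}$, where $\pi_p$ is the product of the left endpoints of the arcs enclosing $p$, read from outside in. Each such word has length $O(N)$. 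The $d\le N$ reflections therefore have total size $O(n^2)$ and can be written down within the time bound. Writing each reflection as $q_it$ and applying Lemma~\ref{lem:pal-ref} and the converse half of Proposition~\ref{prop:bounded}, I obtain
\[
g=q_1\theta(q_2)q_3\cdots .
\]
This product has $d=\ell_T(gt^\eps)$ factors, each a palindrome after free reduction, and $d$ equals $\pl(g)$ by the choice of $\eps$.

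\emph{Main obstacle.} The step needing care is this last one. It must be checked that the nested-prefix description really reproduces the recursion of Lemma~\ref{lem:matching-reflections}: arcs act by conjugation on every factor inside them, and unmatched positions to the left of an arc are simply multiplied on. It must also be checked that converting each $c s c^{-1}$ to the form $q_it$ costs only linear time per factor. For this I will compute $q_i=c\,s\,c^{-1}t$ with the projection $s_x\mapsto xt$, $s_0\mapsto t$ followed by semidirect reduction, which is linear in the word length. This keeps the total within $O(n^2)$ space and hence within the overall $O(n^3)$ time bound.
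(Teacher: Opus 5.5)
Your proposal is correct and follows the same route as the paper: encode both lifts $g$ and $gt$ as reduced Coxeter words of length at most $2n+1$, run the interval recurrence \eqref{eq:dp}, trace back an optimal matching from the lift with the smaller $\delta$, and convert it to palindromes via Lemma~\ref{lem:matching-reflections}, Lemma~\ref{lem:pal-ref} and Proposition~\ref{prop:bounded}. Two of your details go beyond the paper's terser proof without changing the argument: the closed form $\pi_p v_p\pi_p^{-1}$ yields exactly one reflection per unmatched position, so the paper's minimality step is unnecessary, and you explicitly account for the $O(n^2)$ output size.
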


\begin{proof}
Embed $F$ in $W$ using $x\mapsto s_xs_0$ and
$x^{-1}\mapsto s_0s_x$, and freely reduce in the Coxeter alphabet.  This
produces reduced Coxeter representatives of $g$ and $gt$, each of length at
most $2n+1$.  Compute $\delta$ for each representative by
\eqref{eq:dp}.  There are $O(n^2)$ intervals and $O(n)$ possible partners
for the left endpoint of an interval, giving time $O(n^3)$ and space
$O(n^2)$.  Proposition~\ref{prop:delta-reflection} and
Theorem~\ref{thm:length-formula} give the exact palindromic length.

Recording a minimizing choice in each dynamic-programming state recovers an
optimal matching and hence a minimum deletion set.  For either chosen lift
$h\in\{g,gt\}$, Lemma~\ref{lem:matching-reflections} constructs a reflection
factorization of $h$ with at most $\delta(v)$ factors.  Since
$\delta(v)=\ell_T(h)$ by Proposition~\ref{prop:delta-reflection}, minimality
forces this factorization to have exactly $\delta(v)$ factors.  Lemma~
\ref{lem:pal-ref} and the proof of Proposition~\ref{prop:bounded} then convert
the relevant minimum reflection factorization into an optimal palindromic
factorization.
\end{proof}

\begin{remark}
The theorem establishes decidability with an explicit polynomial bound; no
claim is made here that the cubic exponent is optimal.
\end{remark}

\section{Contours carried by noncrossing matchings}

\paragraph{Overview.}

The goal of this section is not to introduce a new invariant.  It is to
reorganize one optimal matching into a tree without changing the word.  The
unmatched positions will become leaves.  Matched arcs that surround the same
set of leaves form nested boundary layers.  Removing those layers exposes
smaller independent pieces, which become the children of a vertex.  If a
vertex naturally has more than two children, we insert empty-boundary
vertices until the tree is binary.  These inserted vertices change neither
the Coxeter word nor its later free-word projection.

A useful picture is therefore
\[
\text{outer matched arcs}
\;\bigl[\;\text{left child}\;\text{right child}\;\bigr]\;
\text{mirror outer arcs}.
\]
The technical lemmas below prove that this picture is literal: every position
of the Coxeter word belongs to exactly one boundary block or one child
support, so no letters are lost or duplicated.

The dynamic program only remembers the number of unmatched positions, but
for the classification theorem we need the geometry of an optimal matching.
Think of the unmatched positions as future leaves.  A matched arc records a
piece of boundary wrapped around some consecutive leaves.  Noncrossing means
that these leaf sets are nested or disjoint, so they form a laminar family.
After collapsing chains of arcs with the same leaf set and splitting vertices
with more than two children, the laminar family becomes an ordered full
binary tree.  The point of the technical lemmas below is to prove that this
tree is not merely an abstract summary: its contour reproduces the original
Coxeter word letter for letter.

We now retain more information from a noncrossing matching.  The results of
this section are purely combinatorial.  Throughout this section we assume
that the matching has at least one unmatched position.  The case of zero
unmatched positions is not needed for the contour construction and is handled
separately in the proof of Theorem~\ref{thm:catalan}.

Fix a reduced Coxeter word $v=v_1\cdots v_N$ and a noncrossing equal-label
matching with $m\ge1$ unmatched positions
\[
        u_1<u_2<\cdots<u_m.
\]
For a matched pair $e=(i,j)$ define its \emph{leaf interval}
\[
        I(e)=\{r:i<u_r<j\}\subseteq\{1,\ldots,m\}.
\]

\begin{lemma}\label{lem:arc-leaf}
Every matched pair contains an unmatched position in its interior.  In
particular, $I(e)$ is a nonempty interval of consecutive integers.
\end{lemma}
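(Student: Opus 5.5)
Fix a matched pair $e=(i,j)$ and suppose, for contradiction, that no position strictly between $i$ and $j$ is unmatched. Then every position in the open interval $(i,j)$ is matched. We first show that each such position is matched to another position in the same interval, and then derive a contradiction with reducedness of $v$.

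For the first step, let $r$ be a position with $i<r<j$, matched by a pair $(a,b)$ with $a<b$ and $r\in\{a,b\}$. The positions $i$ and $j$ are already endpoints of $e$, and the pairs are disjoint, so $(a,b)\neq e$ and $a,b\notin\{i,j\}$. If exactly one of $a,b$ lay strictly inside $(i,j)$, then the pairs $e$ and $(a,b)$ would cross. This is excluded by the noncrossing hypothesis. Hence both $a$ and $b$ lie in $(i,j)$, so the matching restricts to a perfect matching of the positions $i+1,\dots,j-1$. In particular, the interior has even length.

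For the second step, we argue by strong induction on $j-i$ that no arc has an empty leaf interval.

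\begin{itemize}[leftmargin=2em]
\item \emph{Adjacent endpoints.} If $j=i+1$, the arc joins two adjacent positions. Since $v_i=v_j$, this contradicts $v_i\ne v_{i+1}$.
\item \emph{Nonempty interior.} Otherwise position $i+1$ is matched to some $b$ with $i+1<b<j$. The arc $(i+1,b)$ is strictly shorter than $e$. Its interior is contained in the interior of $e$, so it also contains no unmatched position. This contradicts the induction hypothesis.
\end{itemize}

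Equivalently, one can avoid the induction by choosing an innermost arc inside $(i,j)$, or $e$ itself if the interior is empty. Its endpoints are adjacent equal letters, which is impossible in a reduced word. I expect this step to be the only real point of the proof. Note that reducedness of $v$ is essential here; the parity constraint from Lemma~\ref{lem:matching-parity} plays no role.

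For the ``in particular'' claim, $I(e)=\{r: i<u_r<j\}$ is nonempty by what we just proved. Since the $u_r$ are listed in increasing order, the set of indices $r$ with $u_r$ lying in the open interval $(i,j)$ is a set of consecutive integers. Hence $I(e)$ is a nonempty interval of consecutive integers in $\{1,\dots,m\}$, as claimed.
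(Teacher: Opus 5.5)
Your proof is correct and follows the same approach as the paper. In both, noncrossing forces the interior of a leafless arc to be perfectly matched internally, which contradicts reducedness. The only difference is the finish. The paper applies Lemma~\ref{lem:identity-matching} to conclude that $v_i\cdots v_j$ represents the identity and then uses the free-product normal form, whereas you pass directly to an innermost arc with adjacent equal endpoints, which uses only $v_r\ne v_{r+1}$ and is slightly more elementary.
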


\begin{proof}
Suppose $(i,j)$ contains no unmatched position.  Every position strictly
between $i$ and $j$ is then perfectly matched inside $(i,j)$; by
noncrossing, no such position can be matched outside the interval.  Hence
the interior subword represents the identity by
Lemma~\ref{lem:identity-matching}.  Since $v_i=v_j$, the entire subword
$v_i\cdots v_j$ also represents the identity.  But it is a nonempty
contiguous subword of a reduced free-product normal form, a contradiction.
\end{proof}

The lemma has a useful picture.  No matched arc is allowed to surround a
region containing only matched positions; otherwise that entire region would
collapse to the identity inside a reduced word.  Hence every arc must
``see'' at least one unmatched position, and those unmatched positions are
exactly what will become the leaves of the tree.

The leaf intervals form a laminar family: two of them are disjoint or one
contains the other.  We now make the resulting interval structure completely
literal at the level of positions of the Coxeter word.

Let
\[
 \mathcal L=\{I(e):e\text{ is a matched pair}\}
 \cup\bigl\{\{1\},\ldots,\{m\}\bigr\}
 \cup\{[1,m]\},
\]
where repeated intervals are identified.  For $J\in\mathcal L$, write
$\mathcal E(J)$ for the set of matched pairs $e$ with $I(e)=J$.

\begin{lemma}[Boundary-chain lemma]\label{lem:chain}
For every $J\in\mathcal L$, the arcs in $\mathcal E(J)$ form a nested chain.
If two consecutive members of this chain are
\[
        (i,j)\supset(k,\ell),
\]
then
\[
        k=i+1,\qquad \ell=j-1.
\]
Thus, if
\[
 (a_1,b_1)\supset\cdots\supset(a_q,b_q)
\]
is the complete chain $\mathcal E(J)$, listed from outside to inside, and
\[
        U_J=v_{a_1}\cdots v_{a_q},
\]
then the corresponding right boundary is literally $\rev{U_J}$.
\end{lemma}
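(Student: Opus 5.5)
We first show that the arcs in $\mathcal E(J)$ form a chain, and then show that consecutive members of the chain are adjacent.

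\emph{Chain.} Take two distinct arcs $e=(i,j)$ and $e'=(k,\ell)$ with $I(e)=I(e')=J$. By noncrossing they are either nested or disjoint as position intervals. Suppose they are disjoint, say $j<k$. Every leaf counted in $I(e)$ lies strictly before $j$, and every leaf counted in $I(e')$ lies strictly after $k$. So $I(e)\cap I(e')=\emptyset$. Since $J$ is nonempty by Lemma~\ref{lem:arc-leaf}, this contradicts $I(e)=I(e')=J$. Hence any two arcs of $\mathcal E(J)$ are nested. Nested intervals with distinct endpoints are totally ordered by inclusion, so $\mathcal E(J)$ is a chain.

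\emph{Adjacency.} Let $(i,j)\supset(k,\ell)$ be consecutive in this chain. Consider the gap $G=\{i+1,\dots,k-1\}$. No position of $G$ is unmatched: an unmatched position $u_r$ in $G$ would lie inside $(i,j)$ but outside $(k,\ell)$, so $r\in I((i,j))\setminus I((k,\ell))$, contradicting equality of the leaf intervals.

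Now let $p\in G$ be matched with some $p'$. By noncrossing with both $(i,j)$ and $(k,\ell)$, the partner $p'$ lies either in $G$ or in $\{\ell+1,\dots,j-1\}$.

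In the first case, $p$ and $p'$ both lie in $G$, and the arc $(p,p')$ encloses no unmatched position. This contradicts Lemma~\ref{lem:arc-leaf}.

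In the second case, the arc $(p,p')$ satisfies $(i,j)\supset(p,p')\supset(k,\ell)$. Its leaf interval is squeezed between $I((k,\ell))=J$ and $I((i,j))=J$, so it equals $J$. This makes $(p,p')$ a member of $\mathcal E(J)$ strictly between two consecutive members, a contradiction.

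So $G$ is empty, i.e.\ $k=i+1$. The symmetric argument on $\{\ell+1,\dots,j-1\}$ gives $\ell=j-1$. The only subtle point is this squeezing step, which uses monotonicity of $I$ under nesting.

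\emph{Boundary words.} Iterating adjacency gives $a_s=a_1+s-1$ and $b_s=b_1-s+1$ for $s=1,\dots,q$. So the left boundary $v_{a_1}\cdots v_{a_q}$ occupies a contiguous block of positions, and so does the right boundary $v_{b_q}\cdots v_{b_1}$. Since each arc joins equal labels, $v_{b_s}=v_{a_s}$ for every $s$. Reading positions $b_q,\dots,b_1$ from left to right therefore yields $v_{a_q}\cdots v_{a_1}=\rev{U_J}$.
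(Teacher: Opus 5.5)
Your proposal is correct and follows essentially the same argument as the paper. Nestedness comes from the nonemptiness of $J$. The gap positions are then excluded in the same four cases: an unmatched position, a partner inside the gap (Lemma~\ref{lem:arc-leaf}), a crossing partner, and a partner on the opposite side contradicting consecutiveness. You add a little detail the paper leaves implicit, namely the monotonicity of $I$ under nesting in the squeezing step and the explicit indexing $a_s=a_1+s-1$, $b_s=b_1-s+1$ for the final reversal.
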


\begin{proof}
Two arcs with the same nonempty leaf interval cannot be disjoint, so
noncrossing forces them to be nested.  Let
$(i,j)\supset(k,\ell)$ be consecutive in the chain and suppose that
$i<p<k$.  The position $p$ is not unmatched, since otherwise it would lie
inside the outer arc but outside the inner one.  Hence $p$ is matched.

If its partner also lies in $(i,k)$, that matched pair contains no unmatched
position, contrary to Lemma~\ref{lem:arc-leaf}.  A partner in $(k,\ell)$ or
outside $(i,j)$ would force a crossing with one of the two given arcs.  The
only remaining possibility is a partner in $(\ell,j)$.  But then the new arc
contains exactly the same unmatched positions as $(i,j)$ and $(k,\ell)$ and
lies strictly between them, contradicting their consecutiveness in the
complete chain.  Hence $k=i+1$.  The same argument on the right gives
$\ell=j-1$.  Since the two endpoints of each matched pair have the same
Coxeter label, the right boundary is the reversal of the left one.
\end{proof}

\paragraph{Interpretation.}
All arcs surrounding exactly the same group of future leaves are forced to
sit concentrically with no unused positions between consecutive arcs.  They
therefore contribute one literal word $U_J$ on the left and the reversed
word $\rev{U_J}$ on the right.  These paired boundary words will become the
labels of internal tree vertices.

We next attach to every $J\in\mathcal L$ a literal interval of positions,
called its \emph{support}.  If $\mathcal E(J)\ne\varnothing$, let
$H_J=[a_1,b_1]$, where $(a_1,b_1)$ is the outermost arc in the complete
chain above.  If $\mathcal E(J)=\varnothing$ and $J=\{r\}$, set
$H_J=\{u_r\}$.  When $m=1$, this singleton is also the root; in the
boundary-free case Lemma~\ref{lem:arc-leaf} forces there to be no matched
positions at all, so $N=1$ and this convention agrees with $H_J=[1,N]$.
In the remaining boundary-free root case, namely $J=[1,m]$ with $m>1$, set
$H_J=[1,N]$.  If $q=|\mathcal E(J)|$, define the \emph{core} of $J$ by
removing the $q$ left and $q$ right boundary positions from $H_J$.

\begin{lemma}[Support-partition lemma]\label{lem:support-partition}
The following hold.
\begin{enumerate}[label=(\roman*)]
\item The support of the root interval $[1,m]$ is $[1,N]$.
\item If $J=\{r\}$ is a singleton, then the core of $J$ consists of the
single position $u_r$.
\item Let $|J|>1$, and let
\[
        J_1,\ldots,J_d
\]
be the maximal proper members of $\mathcal L$ contained in $J$, listed from
left to right.  Then $d\ge2$, and
\[
        H_{J_1},\ldots,H_{J_d}
\]
are pairwise disjoint consecutive intervals whose union is exactly the core
of $J$.
\end{enumerate}
\end{lemma}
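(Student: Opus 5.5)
Part (i) I would settle by a short case split.  If $\mathcal E([1,m])=\varnothing$, then $H_{[1,m]}=[1,N]$ holds by definition (for $m=1$ this is the convention already checked after Lemma~\ref{lem:arc-leaf}).  If $\mathcal E([1,m])\ne\varnothing$, let $(a_1,b_1)$ be the outermost arc with leaf interval $[1,m]$ and suppose $a_1>1$.  A position $p<a_1$ is not unmatched, since every unmatched position lies inside $(a_1,b_1)$.  So $p$ is matched.  Its partner cannot lie inside $(a_1,b_1)$, because that would create a crossing.  If the partner is $<a_1$, the arc encloses no leaf, contradicting Lemma~\ref{lem:arc-leaf}.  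If the partner is $>b_1$, the arc encloses all $m$ leaves and lies outside $(a_1,b_1)$, contradicting outermostness in the chain of Lemma~\ref{lem:chain}.  The same argument gives $b_1=N$.

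For (ii), with $J=\{r\}$, I would first treat the case $\mathcal E(J)=\varnothing$.  There the support is $\{u_r\}$ and $q=0$, so the core is $\{u_r\}$.  Otherwise let the chain be $(a_1,b_1)\supset\cdots\supset(a_q,b_q)$.  By Lemma~\ref{lem:chain}, $a_s=a_1+s-1$ and $b_s=b_1-s+1$, so the core is the open interval $(a_q,b_q)$.  I claim this interval is exactly $\{u_r\}$.  A matched position $p$ in $(a_q,b_q)$ would have its partner inside the interval by noncrossing.  That arc encloses at least one leaf by Lemma~\ref{lem:arc-leaf}, and the only leaf available is $u_r$, so it would lie in $\mathcal E(J)$ strictly inside the innermost arc, which is impossible.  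Hence every position of the core is unmatched, and the only unmatched position there is $u_r$.

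For (iii), let the core be $C=(a_q,b_q)$, or $[1,N]$ when there are no boundary arcs.  The leaves in $C$ are exactly $\{u_s:s\in J\}$.  I would take the maximal members $J_1,\dots,J_d$ and verify the following in order.
\begin{enumerate}[label=(\alph*)]
\item Every leaf $s\in J$ lies in some $J_i$, since $\{s\}\in\mathcal L$ is a proper member.  The $J_i$ are pairwise disjoint by laminarity and maximality.  So they partition $J$, and $d\ge2$ because each is proper.
\item Each support $H_{J_i}$ lies in $C$.  For a singleton without arcs this is clear.  Otherwise the outermost arc of $J_i$ lies inside $C$, because it is nested inside the innermost arc of $J$ (their leaf sets are nested and $J_i\ne J$).
\item The supports are disjoint and ordered left to right, since their leaf sets are disjoint and arcs do not cross.
\end{enumerate}
The main obstacle is coverage: every position $p\in C$ must lie in some $H_{J_i}$.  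If $p$ is unmatched, it is a leaf $u_s$ with $s\in J_i$ for some $i$; then $p\in H_{J_i}$, because $H_{J_i}$ is either $\{u_s\}$ or an arc enclosing $u_s$.  If $p$ is matched, its arc $e$ lies inside $C$ by noncrossing with the innermost arc of $J$.  Then $I(e)\subsetneq J$: equality would make $e$ an arc of $\mathcal E(J)$ strictly inside the innermost one.  So $I(e)\subseteq J_i$ for some $i$.  It remains to show that $e$ lies inside $H_{J_i}$.  When $I(e)=J_i$ this is immediate, since $e$ belongs to the chain of $J_i$.  When $I(e)\subsetneq J_i$, I would argue by contradiction: if $e$ were not contained in $H_{J_i}$, noncrossing would force $e$ to enclose the outermost arc of $J_i$.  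Then $e$ would enclose every leaf of $J_i$, giving $I(e)\supseteq J_i$, a contradiction.  The delicate subcase is $J_i$ a singleton with no arcs, where $H_{J_i}=\{u_s\}$.  There $I(e)\subseteq\{s\}$ forces $I(e)=\{s\}$, so $e\in\mathcal E(\{s\})\ne\varnothing$, contradicting the assumption that this singleton has no arcs.  Coverage together with (b) and (c) shows that the $H_{J_i}$ are consecutive intervals whose union is exactly $C$.
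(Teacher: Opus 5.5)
Your proposal is correct and follows the paper's proof essentially step for step. It uses the same three-way case analysis on the partner of a position left of the outermost root arc in (i), and the same ``a further arc would belong to $\mathcal E(\{r\})$'' contradiction in (ii). In (iii) it uses the same partition of $J$ by its maximal members, the same coverage argument split into unmatched and matched positions, and the same disjointness argument from noncrossing plus disjointness of leaf sets.

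There is one tiny omission, and the paper's equally terse wording shares it. In the coverage step, ``$e$ not contained in $H_{J_i}$'' also permits $e$ to lie entirely to one side of the outermost arc of $J_i$. That case is ruled out immediately because $\varnothing\ne I(e)\subseteq J_i$.
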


\begin{proof}
For (i), suppose first that the root has a nonempty boundary chain, with
outermost arc $(a,b)$.  Every unmatched position lies in $(a,b)$.  If there
were a position $p<a$, then $p$ would have to be matched.  Its partner cannot
also lie to the left of $a$, because that arc would contain no unmatched
position; it cannot lie in $(a,b)$, because it would cross $(a,b)$; and if it
lay to the right of $b$, the resulting arc would contain all unmatched
positions and strictly contain the outermost root arc.  All three
possibilities are impossible.  Thus $a=1$, and similarly $b=N$.  If the root
has no boundary chain, its support was defined to be $[1,N]$.

For (ii), remove the complete chain of arcs whose leaf interval is
$\{r\}$.  Any remaining position in the resulting core other than $u_r$
would have to be matched.  Its arc has a nonempty leaf interval by
Lemma~\ref{lem:arc-leaf}; since it lies inside the innermost boundary arc, or
inside the root when the boundary chain is empty, that interval is contained
in $\{r\}$ and hence equals $\{r\}$.  This contradicts completeness of the
removed chain.  Thus only $u_r$ remains.

For (iii), laminarity implies that the maximal proper members of $J$ are
pairwise disjoint.  They cover the leaf set $J$: every $r\in J$ belongs to
the singleton $\{r\}\in\mathcal L$, and in the finite inclusion poset this
singleton lies below a maximal proper member of $J$.  Since $|J|>1$, one
maximal proper member cannot cover all of $J$, so $d\ge2$.

It remains to prove the literal statement about positions.  Let $p$ be any
position in the core of $J$.  If $p=u_r$ is unmatched, then $r$ lies in a
unique maximal proper member $J_s$, and by the definition of support we have
$p\in H_{J_s}$.  Suppose instead that $p$ is an endpoint of a matched arc
$e$.  The boundary positions belonging to $\mathcal E(J)$ have already been
removed, so $I(e)\ne J$.  Noncrossing implies that $e$ lies inside the
innermost boundary arc of $J$ when such an arc exists; for the root with
empty boundary this is automatic.  Hence $I(e)$ is a proper member of
$\mathcal L$ contained in $J$.  It is therefore contained in a unique
maximal proper member $J_s$.  If $I(e)=J_s$, then $e$ lies in the complete
boundary chain defining $H_{J_s}$; if $I(e)\subsetneq J_s$, noncrossing
forces $e$ to lie inside the outermost arc defining $H_{J_s}$.  Thus again
$p\in H_{J_s}$.

We have proved that the child supports cover the core.  Two distinct child
supports cannot overlap.  Indeed, if both are bounded by arcs, an overlap of
their support intervals would force the arcs to cross or one to contain the
other; the latter would force containment of the corresponding nonempty
leaf intervals, contradicting maximality and disjointness.  If one support
is a singleton unmatched position, lying inside the other support would put
that leaf into the other leaf interval.  Hence the supports are disjoint.
Their left-to-right order is the order of the leaf intervals, and since they
cover a single interval, they are consecutive with no gaps.
\end{proof}

\paragraph{Interpretation.}
After the left and right boundary chains of a vertex are removed, the remaining middle block is determined exactly: the core is exactly the consecutive union
of the child supports.  Thus every Coxeter position belongs either to a
vertex boundary or to one child, and no letter is lost when the matching is
turned into a tree.

The two lemmas above give the contour recursion without any appeal to a
picture.

At this point the combinatorics can be summarized as follows.  The unmatched
positions give the leaves, complete chains of equal leaf interval give the
left/right boundary words of vertices, and the support-partition lemma says
that the children fill the core with no gaps.  Refining an ordered vertex of
valence $d>2$ by inserting empty-boundary binary vertices changes no word.
We therefore obtain a genuine full binary contour.

\begin{proposition}[Literal Coxeter contour decomposition]\label{prop:contour}
Assume $m\ge1$.  The reduced Coxeter word $v$ admits an ordered rooted-tree
decomposition with leaf set $\{1,\ldots,m\}$ such that:
\begin{enumerate}[label=(\roman*)]
\item the leaf corresponding to $r$ is the odd Coxeter palindrome
\[
        Q_r=U_{\{r\}}\,v_{u_r}\,\rev{U_{\{r\}}};
\]
\item if $J$ is a nonsingleton vertex with ordered children
$J_1,\ldots,J_d$, then the subword on $H_J$ is literally
\[
        U_J\,V_{J_1}\cdots V_{J_d}\,\rev{U_J},
\]
where $V_{J_s}$ is the subword on $H_{J_s}$.
\end{enumerate}
Every internal vertex has at least two children.  Replacing each vertex with
$d>2$ children by any ordered full binary refinement and assigning the empty
word to each inserted internal vertex gives an ordered full binary tree with
$m$ leaves and leaves the Coxeter word unchanged letter for letter.
\end{proposition}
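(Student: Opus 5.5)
The plan is to take the tree to be the Hasse diagram of the laminar family $\mathcal L$ under inclusion. The root is $[1,m]$, the leaves are the singletons $\{r\}$, and the children of a nonsingleton $J$ are its maximal proper members $J_1,\ldots,J_d$, ordered left to right. Laminarity, together with the fact that every leaf interval is a nonempty interval of consecutive integers (Lemma~\ref{lem:arc-leaf}), makes this a rooted ordered tree. Its leaf set is exactly $\{1,\ldots,m\}$, since $\mathcal L$ contains all singletons and no member is empty. Lemma~\ref{lem:support-partition}(iii) gives $d\ge2$ at every nonsingleton vertex, so every internal vertex has at least two children.

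For the literal identities, I would first describe $H_J$ for each vertex. When $\mathcal E(J)\ne\varnothing$, Lemma~\ref{lem:chain} shows that $H_J=[a_1,b_1]$ begins with the consecutive positions $a_1,\ldots,a_q$, which carry $U_J$. It ends with $b_q,\ldots,b_1$, which carry $\rev{U_J}$, because matched endpoints have equal labels. Removing these $2q$ positions leaves the core.

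For (i), Lemma~\ref{lem:support-partition}(ii) says the core of $\{r\}$ is the single position $u_r$. Hence the subword on $H_{\{r\}}$ is $U_{\{r\}}v_{u_r}\rev{U_{\{r\}}}$, which is an odd palindrome in the Coxeter letters. In the boundary-free case it is just $v_{u_r}$, consistent with $U=\varnothing$.

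For (ii), Lemma~\ref{lem:support-partition}(iii) says the core is the consecutive disjoint union $H_{J_1}\cdots H_{J_d}$ in left-to-right order. Reading positions in order then gives $U_J V_{J_1}\cdots V_{J_d}\rev{U_J}$. For the boundary-free root I would use the convention $U=\varnothing$ and $H=[1,N]$, with core $[1,N]$. Part (i) of the same lemma identifies the root's support with the whole word. So, inductively from the root, the contour of the tree reproduces $v$ letter for letter.

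The binary refinement is the last step. For a vertex with children $J_1,\ldots,J_d$ and $d>2$, I would pick any ordered full binary tree with $d$ leaves, attach $J_1,\ldots,J_d$ to its leaves in order, keep the label $U_J$ on its root, and assign the empty word to every newly inserted internal vertex. Each inserted vertex contributes $\varnothing\,(\cdots)\,\rev{\varnothing}$, so by associativity of concatenation the contour of the refined subtree is still $U_J V_{J_1}\cdots V_{J_d}\rev{U_J}$. The leaf count is unchanged, so the result is an ordered full binary tree with $m$ leaves.

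I expect the main obstacle to be bookkeeping, not substance, because the real work is already done in Lemmas~\ref{lem:chain} and~\ref{lem:support-partition}. The points that need care are that the children's supports sit exactly inside the core, and not inside the boundary chains, and that the degenerate cases are handled: $m=1$, and a root or leaf with an empty boundary chain. I would check these against the conventions fixed just before Lemma~\ref{lem:support-partition}.
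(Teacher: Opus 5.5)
Your proposal is correct and follows the paper's proof essentially step for step. You build the inclusion tree of $\mathcal L$ with the maximal proper members as ordered children, and you use Lemma~\ref{lem:chain} together with Lemma~\ref{lem:support-partition}(i)--(iii) for the literal boundary/core identities and for $d\ge2$. You then refine to a binary tree by giving every inserted internal vertex the empty label, so the contour is plain concatenation.
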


\begin{proof}
Order the augmented laminar family $\mathcal L$ by inclusion.  Its leaves are
the singletons.  By Lemma~\ref{lem:support-partition}(iii), the maximal
proper members below every nonsingleton $J$ form its ordered list of at
least two children.

For a singleton, Lemma~\ref{lem:support-partition}(ii) and the
Boundary-chain lemma give exactly the word in (i), which has odd length and
is a Coxeter palindrome.  For a nonsingleton $J$, the Boundary-chain lemma
identifies the literal prefix and suffix of $H_J$ with $U_J$ and $\rev{U_J}$,
while Lemma~\ref{lem:support-partition}(iii) says that the remaining core is
precisely the consecutive concatenation of the child supports.  This proves
(ii).  Starting from the root, whose support is the whole word by
Lemma~\ref{lem:support-partition}(i), induction down the inclusion tree
therefore reconstructs every position of $v$ exactly once and yields $v$
itself as the contour word.

Finally, a $d$-fold ordered concatenation of child blocks may be parenthesized
by an ordered full binary tree.  Every newly inserted internal vertex is
given empty boundary word, so its contour operation is simply concatenation.
Thus binary refinement changes neither a letter nor a block boundary of the
original Coxeter word.
\end{proof}

\begin{remark}
Optimality of the matching is not used in Proposition~\ref{prop:contour}.
For a reduced universal-Coxeter word, Lemma~\ref{lem:arc-leaf} forces the
required unmatched leaf inside every matched arc for \emph{every}
noncrossing equal-label partial matching.  Optimality enters only later,
when the number of leaves is identified with reflection length.
\end{remark}

\section{A two-state projection to the free group}

\paragraph{Overview.}
The contour of Section~4 still lives in the Coxeter alphabet.  We now read it
back into the free-group alphabet.  The only information needed is whether an
even or odd number of Coxeter letters has already been read.  This is why a
two-state transducer is sufficient.  The key point is stronger than equality
in the group: for a reduced Coxeter word, the projected free word is already
freely reduced, so the block boundaries of the contour survive literally.

The embedding $F\le W$ can be inverted on normal forms by a two-state
transducer.  The explicit form of this transducer is what preserves the
literal block structure obtained in Proposition~\ref{prop:contour}.

Let the state be $\eps\in\{0,1\}$.  Reading one Coxeter letter changes the
state by $1$ modulo $2$.  The letter $s_0$ produces no free letter.  If
$s_x$ with $x\in X$ is read in state $0$, output $x$; if it is read in
state $1$, output $x^{-1}$.  Denote by $\Phi_\eps(U)$ the output obtained by
reading a Coxeter word $U$ from initial state $\eps$.

The rule can be displayed in one table:
\[
\begin{array}{c|cc}
\text{Coxeter letter} & \text{state }0 & \text{state }1\\ \hline
s_0 & \varnothing & \varnothing\\
s_x & x & x^{-1}
\end{array}
\]
and the state flips after every Coxeter letter.  For the earlier example
$s_as_bs_as_0$, starting in state $0$ produces
\[
        a\,b^{-1}\,a,
\]
exactly the original reduced free word.

\begin{lemma}[Literal projection]\label{lem:projection}
Let $v$ be a reduced Coxeter word.
\begin{enumerate}[label=(\roman*)]
\item For either initial state $\eps$, the word $\Phi_\eps(v)$ is freely
reduced.
\item If $[v]=gt^{|v|\bmod2}$ in semidirect-product normal form, then
$\Phi_0(v)$ is the reduced word representing $g$.
\item If $v=U_1\cdots U_r$ is any decomposition into contiguous blocks and
the terminal state of each block is passed to the next, then the block
outputs concatenate literally to $\Phi_\eps(v)$.  In particular no hidden
free cancellation occurs at a block boundary.
\end{enumerate}
\end{lemma}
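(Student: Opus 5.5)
I would prove the three parts in the order (iii), (i), (ii). Part (iii) is the bookkeeping, (i) is the combinatorial heart, and (ii) follows by comparing with the encoding $\kappa$.

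\emph{Part (iii).} The transducer is deterministic and the output of each letter depends only on the current state. The state after reading a prefix is $\eps$ plus the prefix length modulo $2$. So reading $U_1\cdots U_r$ from state $\eps$ is the same as reading $U_1$ from $\eps$, then $U_2$ from the terminal state of $U_1$, and so on. Hence the outputs concatenate as strings, and this equality of strings is immediate by induction on $r$. The claim that ``no hidden cancellation occurs at a boundary'' is then a consequence of (i), since the concatenated output is a freely reduced word.

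\emph{Part (i).} This is the step I expect to need the most care. Suppose two consecutive output letters cancel. They come from Coxeter letters $s_x$ at positions $p<p'$ such that every letter strictly between them is $s_0$. Since $v$ is reduced, the gap contains either no letter or exactly one $s_0$.
\begin{itemize}
\item If the gap is empty, then $v_p=s_x$ and $v_{p'}=s_y$ with $x\ne y$. The two outputs involve different basis letters and cannot cancel.
\item If the gap is the single letter $s_0$, then $p'=p+2$ and the states at $p$ and $p'$ are equal. The outputs are therefore $x^{\pm1}$ followed by $y^{\pm1}$ with the same sign. Two letters with the same sign never cancel, even when $x=y$.
\end{itemize}
So no adjacent cancellation is possible. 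The parity bookkeeping here, showing that the states at $p$ and $p'$ coincide, is the only delicate point.

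\emph{Part (ii).} I would show that $\Phi_0$ inverts the encoding. In state $0$, reading $s_xs_0$ outputs $x$, and reading $s_0s_x$ outputs $x^{-1}$ because $s_x$ is read in state $1$. So for the unreduced concatenated encoding $c$ of a word $w$, with an optional trailing $s_0$ appended, $\Phi_0(c)=w$.

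Next, cancelling an adjacent pair $s_is_i$ does not change the parity of the position of any other letter. Removing $s_0s_0$ changes no output. Removing $s_xs_x$ removes the outputs $x^{\pm1}x^{\mp1}$, which is a free cancellation. Hence $\Phi_0$ of the reduced Coxeter word represents the same element $g$ as $w$.

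Finally, by (i) this output is freely reduced, so it equals the unique reduced word for $g$. To finish, I would use uniqueness of reduced Coxeter normal forms: the given $v$ with $[v]=gt^{|v|\bmod 2}$ is exactly $\kappa_\eps(w)$ for the appropriate $\eps$. Here the parity of $|v|$ matches the coset of $[v]$, since the parity homomorphism sends each $s_i$ to the nontrivial element.
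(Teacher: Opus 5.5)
Your proposal is correct. Parts (i) and (iii) follow the paper's argument. Consecutive outputs come from non-$s_0$ letters separated by zero or one $s_0$. With no $s_0$ between them, reducedness forbids equal labels; the paper phrases this via opposite signs, you via distinct basis letters. With one $s_0$ between them, the signs agree. Part (iii) is determinism of the reading together with (i).

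The real difference is in (ii). The paper maintains the semidirect-product normal form while reading. After a prefix $v_1\cdots v_j$ read from state $0$, one has $[v_1\cdots v_j]=[\Phi_0(v_1\cdots v_j)]\,t^{j\bmod 2}$. This is checked one generator at a time: $g\cdot xt=gx\,t$, $gt\cdot xt=g\theta(x)=gx^{-1}$, and $s_0$ only flips the exponent of $t$. Together with (i), this identifies $\Phi_0(v)$ with the reduced word of $g$ directly for the given $v$.

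You instead check three things: that $\Phi_0$ undoes the unreduced encoding, and that each cancellation $s_is_i\to\varnothing$ changes the output only by a free cancellation. You then use uniqueness of reduced words in $\ast C_2$ to identify $v$ with $\kappa_\varepsilon(w)$. You should state explicitly that $[s_xs_0]=x$ and $[s_0s_x]=txt=x^{-1}$, so the encoding really represents $gt^\varepsilon$.

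The paper's invariant is shorter, needs no normal-form theorem for $W$, and applies verbatim to unreduced Coxeter words. Your route needs an extra appeal to uniqueness of reduced Coxeter words. In exchange, it proves explicitly the claim announced in Section~2 that $\Phi_0(\kappa_\varepsilon(w))=w$. Your invariance under the relators $s_i^2$ is the same well-definedness fact as the paper's, viewed from the relator side rather than the generator side.
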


\begin{proof}
Consider two consecutive output letters.  They come from consecutive
nonzero Coxeter letters, with either no $s_0$ or exactly one $s_0$ between
them.  In the first case their signs are opposite; if they were inverse
free letters, the two Coxeter labels would be equal, contradicting reducedness
of $v$.  In the second case their signs are equal, so they cannot be
inverses.  This proves (i), for either initial state.

Statement (ii) is obtained by maintaining the semidirect-product normal form
while the word is read.  Statement (iii) is the associativity of the same
finite-state reading process together with (i).
\end{proof}

\begin{corollary}[Recovery of the reduced free word]\label{cor:recovery}
Let $w$ be the reduced free word representing $g\in F$, let
$\eps\in\{0,1\}$, and let $v$ be any reduced Coxeter word representing
$gt^\eps$.  Then
\[
        |v|\equiv\eps\pmod2,
        \qquad
        \Phi_0(v)=w.
\]
Thus the projection does not depend on which reduced Coxeter representative
is chosen.
\end{corollary}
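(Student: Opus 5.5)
The plan is to deduce both claims from Lemma~\ref{lem:projection}, using the parity homomorphism $W\to C_2$ for the congruence and the uniqueness of reduced representatives in both $W$ and $F$ for the equality.

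\emph{Step 1: the parity.} Every Coxeter generator $s_i$ maps to the nontrivial element of $C_2$ under the parity homomorphism $W\to C_2$. The even subgroup $F$ maps to the trivial element and $t$ maps to the nontrivial one. Hence any word $v$ representing $gt^\eps$ satisfies $|v|\equiv\eps\pmod 2$. In particular $[v]=gt^{|v|\bmod 2}$ in the semidirect-product normal form, which is exactly the hypothesis of Lemma~\ref{lem:projection}(ii).

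\emph{Step 2: the projection.} Lemma~\ref{lem:projection}(ii) now says that $\Phi_0(v)$ is the reduced word representing $g$. By uniqueness of freely reduced representatives in $F$, this word is $w$.

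If one prefers not to rely on the terse proof of part~(ii), I would verify it directly by induction on $|v|$. The invariant is that after reading a prefix $v'$ from state $0$, the output $\Phi_0(v')$ represents the $F$-component of $[v']$, and the state records the exponent of $t$. The inductive step has two cases. Reading $s_0=t$ in state $\eps$ multiplies $ht^\eps$ by $t$ on the right and outputs nothing. Reading $s_x=xt$ in state $\eps$ turns $ht^\eps$ into $h\,t^\eps x t$. When $\eps=0$ this is $hx\,t$. When $\eps=1$ it is $h\,\theta(x)\,t^2=hx^{-1}$. In both cases the output letter matches the transducer table. Lemma~\ref{lem:projection}(i) then gives that the output is already freely reduced, so it equals the unique reduced word for the element.

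\emph{Step 3: independence of the representative.} Reduced Coxeter words in a free product of copies of $C_2$ are unique, so $v$ is in fact determined by $gt^\eps$; in any case $\Phi_0(v)=w$ depends only on $g$.

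The only step needing any care is the sign bookkeeping in the $\eps=1$ case, $t\,x\,t=\theta(x)=x^{-1}$. Everything else is immediate from the earlier results.
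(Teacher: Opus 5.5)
Your proposal is correct and follows the paper's proof: the parity homomorphism gives $|v|\equiv\eps\pmod 2$, and then Lemma~\ref{lem:projection}(ii) together with uniqueness of freely reduced words gives $\Phi_0(v)=w$. Your induction on $|v|$ spells out the ``maintain the semidirect-product normal form'' argument that the paper states only tersely for part~(ii), and your remark that $v$ is in fact unique, by the free-product normal form, is accurate.
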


\begin{proof}
Every Coxeter generator lies in the odd coset, so the parity of $|v|$ is the
coset parity $\eps$.  Lemma~\ref{lem:projection}(ii) then says that
$\Phi_0(v)$ is the reduced word representing $g$, hence it is $w$.
\end{proof}

\begin{lemma}[Projected leaves]\label{lem:leaf-projection}
If $Q$ is an odd Coxeter palindrome, then $\Phi_\eps(Q)$ is a free-group
palindrome for either initial state $\eps$.
\end{lemma}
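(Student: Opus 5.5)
Write the odd Coxeter palindrome as $Q=q_1\cdots q_L$ with $L=2h+1$ and $q_i=q_{L+1-i}$ for all $i$. The plan is a direct position-by-position comparison of the transducer output with its reversal. By Lemma~\ref{lem:projection}(i) the output $\Phi_\eps(Q)$ is already freely reduced, so it suffices to check that this word equals its own reversal literally.

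The key observation concerns the state at matching positions. The letter $q_i$ is read in state $\eps+i-1 \pmod 2$, and its mirror $q_{L+1-i}$ is read in state $\eps+L-i\pmod2$. These two states differ by $L+1-2i$, which is even because $L$ is odd. Hence mirror positions carry the same label and are read in the same state. Consequently the output symbol attached to position $i$ is identical to the one attached to position $L+1-i$. This symbol is either empty (when the label is $s_0$), or $x$, or $x^{-1}$, and in each case it is the same at both mirror positions.

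Now $\Phi_\eps(Q)$ is the concatenation, over $i=1,\dots,L$ in order, of the output symbols $o_i\in\{\varnothing\}\cup X^{\pm1}$, each of length at most one. Since $o_i=o_{L+1-i}$, reading the sequence $(o_i)$ backwards gives the same sequence. Deleting the empty symbols commutes with reversal, so the reversal of the output equals the output.

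The only point needing care is the parity bookkeeping, namely that the state flips on every letter including $s_0$. This is exactly where oddness of $L$ is used: for even Coxeter palindromes, mirror positions would be read in opposite states, and the output would instead be of the form $u\,u^{-1}$-like rather than a palindrome. I do not expect any real obstacle. Reducedness is supplied by Lemma~\ref{lem:projection}(i), which is needed because a palindrome is by definition a reduced word. The middle letter $q_{h+1}$ is self-mirrored, so it imposes no constraint.
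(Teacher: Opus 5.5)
Your proof is correct and is essentially the paper's argument: mirror positions of an odd-length Coxeter palindrome carry the same label and are read in the same state, so they emit the same free letter (or nothing, for $s_0$), and the output is invariant under reversal. Your explicit appeal to Lemma~\ref{lem:projection}(i) for free reducedness, which the paper leaves implicit, is a useful addition. It is valid provided $Q$ is itself a reduced Coxeter word, as in the application where each leaf block is a contiguous subword of the reduced word $v$; without this, $s_as_as_a$ projects to the unreduced word $aa^{-1}a$.
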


\begin{proof}
Write $|Q|=2r+1$.  Mirror positions of $Q$ have the same Coxeter label and
the same parity of position.  Hence, whenever that label is nonzero, the
two positions output the same free generator with the same sign.  Positions
labelled $s_0$ produce no output.  The output is therefore invariant under
reversal.
\end{proof}

The next phenomenon explains the mixture of inversion and reversal in the
final formulas.  An internal boundary word is read once on the way into a
subtree and then again, backwards, on the way out.  Whether the signs also
flip depends only on the parity of the material enclosed by that boundary.
For a free word $A$, define
\[
\rho_r(A)=
\begin{cases}
A^{-1},& r\text{ even},\\
\rev{A},& r\text{ odd}.
\end{cases}
\]

\begin{lemma}[Return parity]\label{lem:return}
Suppose a Coxeter block has the form $U V \rev U$, and let
$A=\Phi_\eps(U)$.  If the state is propagated through $U$ and $V$, then the
output on the returning block $\rev U$ is
\[
\begin{cases}
A^{-1},& |V|\text{ even},\\
\rev{A},& |V|\text{ odd}.
\end{cases}
\]
This holds for either initial state $\eps$.
\end{lemma}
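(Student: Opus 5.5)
We compare the transducer reading $U$ forward from its starting state with the reading of $\rev U$ on the way out, position by position. Write $U=c_1\cdots c_q$. Starting in state $\eps$, the letter $c_i$ is read in state $\eps+i-1 \pmod 2$. After reading $U$ and $V$, the state is $\eps+q+|V|$. The returning block $\rev U=c_q\cdots c_1$ is then read, and its $j$-th letter $c_{q-j+1}$ is read in state $\eps+q+|V|+j-1$. With $i=q-j+1$ this state equals
\[
\eps+|V|+2q-i\equiv \eps+|V|+i \pmod 2,
\]
which is $\eps+|V|+i-1$ shifted by one. So each letter $c_i$ is read on the return in the state opposite to its forward state if $|V|$ is even, and in the same state if $|V|$ is odd.

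Next we translate this into outputs. Letters $s_0$ produce nothing in either direction, so only the nonzero letters matter, and they appear in reversed order on the way back. Let $A=a_1\cdots a_p$ be the forward output, where $a_k$ comes from some $c_{i_k}=s_x$. Then the return output lists the same generators in reverse order $x_{i_p},\dots,x_{i_1}$. The sign of each is flipped when $|V|$ is even and kept when $|V|$ is odd. Reversed order with flipped signs is exactly $A^{-1}$, and reversed order with the same signs is exactly $\rev A$.

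The only care needed is the off-by-one in the state bookkeeping: that reading a letter flips the state \emph{after} output, and the parity identity $2q-i\equiv i$. Neither the initial state $\eps$ nor the contents of $V$ enter except through $|V|\bmod 2$, which gives the claim for both initial states.
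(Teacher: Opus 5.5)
Your proof is correct and is essentially the paper's argument: you compare each letter of $U$ with its mirror in $\rev U$, find that their reading states differ by $|V|+1 \pmod 2$, and conclude that the output is reversed, with signs flipped exactly when $|V|$ is even. The only difference is that you write out the index bookkeeping ($\eps+i-1$ versus $\eps+|V|+i$) that the paper states in one line.
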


\begin{proof}
The return block reads the Coxeter letters of $U$ in reverse order.  Thus
the output order is reversed.  Comparing a letter of $U$ with its mirror in
$\rev U$, the parity of the intervening number of state changes differs by
$|V|+1$.  Hence the free sign is reversed when $|V|$ is even and preserved
when $|V|$ is odd.  Reversal together with sign reversal is $A^{-1}$;
reversal without sign change is $\rev A$.
\end{proof}

\begin{lemma}[Parity of a contour subtree]\label{lem:subtree-parity}
If a contour subtree has Coxeter word $V$ and exactly $r$ leaves, then
\begin{equation}\label{eq:leaf-parity}
        |V|\equiv r\pmod2.
\end{equation}
\end{lemma}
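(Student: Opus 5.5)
The plan is to prove the congruence by structural induction on the contour subtree, using the recursive description in Proposition~\ref{prop:contour}. Alternatively, it follows in one line from Lemma~\ref{lem:matching-parity}. I give the inductive argument first because it matches how the lemma will be used with Lemma~\ref{lem:return}, and then note the shortcut.

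For the base case, the subtree is a single leaf. By Proposition~\ref{prop:contour}(i), its Coxeter word is $Q_r=U_{\{r\}}\,v_{u_r}\,\rev{U_{\{r\}}}$. This has length $2|U_{\{r\}}|+1$, which is odd and agrees with $r=1$ leaf.

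For the inductive step, take an internal vertex $J$ with ordered children $J_1,\dots,J_d$. By Proposition~\ref{prop:contour}(ii), its word is literally $U_J\,V_{J_1}\cdots V_{J_d}\,\rev{U_J}$. Hence
\[
|V_J|=2|U_J|+\sum_{s=1}^d|V_{J_s}|.
\]
The boundary contributes an even number of letters. The leaf set of $J$ is the disjoint union of the leaf sets of its children, so the induction hypothesis gives $|V_J|\equiv\sum_s r_s=r\pmod 2$.

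Two details need checking. First, the binary refinement introduces vertices with empty boundary word, and then $|U|=0$ is still even. Second, the disjointness of the children's leaf sets, and the fact that they cover $J$, come from Lemma~\ref{lem:support-partition}(iii). I do not expect either to be a real obstacle.

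For the shortcut, the subword $V$ on a support $H_J$ inherits a partial matching. Every arc with an endpoint in $H_J$ lies entirely inside $H_J$, by noncrossing and the definition of the support. The unmatched positions inside $H_J$ are exactly the $r$ leaves $u_s$ with $s\in J$. Lemma~\ref{lem:matching-parity} then gives $|V|\equiv r\pmod 2$ directly.

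The only point needing care is this closure of the matching under restriction to a support. It holds because $H_J$ is bounded by an arc of the matching, or is a single unmatched position, or is the whole word.
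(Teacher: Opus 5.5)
Your inductive argument is correct and is the paper's proof written out as an induction: each boundary word $U$ occurs together with $\rev{U}$ and contributes $2|U|$ letters (zero for the inserted empty-boundary vertices), while each leaf block is an odd Coxeter palindrome, so $|V|$ is congruent to the number of leaves. Your matching shortcut via Lemma~\ref{lem:matching-parity} is also valid in the setting of Proposition~\ref{prop:contour}; note, however, that the lemma is invoked in Proposition~\ref{prop:project-contour} for contours given only by the abstract recursion, where your counting argument applies verbatim but the shortcut would first need you to construct a matching, for instance by pairing each letter of every boundary word $U$ with its mirror in $\rev{U}$ and the mirror positions inside each leaf palindrome.
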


\begin{proof}
Each internal boundary word occurs twice, once on the way into its subtree
and once in reverse order on the way out, and therefore contributes an even
number of Coxeter letters.  Each leaf block is an odd Coxeter palindrome and
therefore contributes an odd number of letters.  Modulo two, the total
length is consequently the number of leaves.
\end{proof}

Thus the return operation at a vertex depends only on the number of leaves
below that vertex.

Before packaging the projection lemmas, we define the word templates to
which contours will project.  Let $\mathcal T$ be an ordered full binary
tree.  Write $\lambda(v)$ for the number of leaves below a vertex $v$.
Attach an arbitrary free-word variable $A_v$ to every internal vertex and a
palindrome variable $P_v$ to every leaf.  Define
$\mathcal C(\mathcal T)$ recursively by
\[
        \mathcal C(v)=P_v
\]
for a leaf, and
\begin{equation}\label{eq:catalan-template}
\mathcal C(v)
   =A_v\,\mathcal C(v_L)\,\mathcal C(v_R)\,
      \rho_{\lambda(v)}(A_v)
\end{equation}
for an internal vertex.

A reduced free word $w$ is a \emph{literal instance} of
$\mathcal C(\mathcal T)$ if the variables can be replaced by words, with
the leaf variables replaced by palindromes, so that the displayed
concatenation is exactly $w$ as a word.  Empty words are allowed.  In
particular, no cancellation between template blocks is part of the
definition.  This word ``literal'' is important: the theorem classifies the
actual reduced word, not merely the group element represented by an
expression that later reduces.  The arbitrary internal words record the
cancellation that occurred before the Coxeter contour was projected, while
the leaf blocks are the genuine palindrome pieces.

The preceding local facts can now be packaged into a single statement.
This formulation makes the state propagation in later recursive arguments
explicit.

\begin{proposition}[Projection of reduced contours]\label{prop:project-contour}
Let $\mathcal T$ be an ordered full binary tree.  Attach a Coxeter word
$U_z$ to every internal vertex $z$ and an odd Coxeter palindrome $Q_z$ to
every leaf.  Define recursively
\[
        V_z=Q_z
\]
at a leaf and
\[
        V_z=U_z\,V_{z_L}\,V_{z_R}\,\rev{U_z}
\]
at an internal vertex.  Assume that the root contour $V_{\mathrm{root}}$ is
a reduced Coxeter word (as it is in the application coming from
Proposition~\ref{prop:contour}).  Then for every vertex $z$ and every initial
state $\eps\in\{0,1\}$, the word $\Phi_\eps(V_z)$ is literally an instance of
the Catalan template associated with the subtree rooted at $z$.  More
precisely, the variable attached to an internal vertex $y$ is the output
$\Phi_{\eps_y}(U_y)$ in the actual state $\eps_y$ in which that block is
entered, and each leaf variable is the projection of its leaf block in its
actual entry state.  The projected root template instance is freely reduced.
\end{proposition}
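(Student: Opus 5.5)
We argue by structural induction on the subtree rooted at $z$, proving the statement simultaneously for both initial states $\eps$. The induction hypothesis is the following. For every vertex $z$ and every $\eps$, reading $V_z$ from state $\eps$ produces a word that is literally $\mathcal C(z)$. In this instance the internal variables are $A_y=\Phi_{\eps_y}(U_y)$ and the leaf variables are $P_y=\Phi_{\eps_y}(Q_y)$, where $\eps_y$ is the state in which the block of $y$ is entered. Throughout, state propagation is governed by Lemma~\ref{lem:projection}(iii). Reading a concatenation of blocks, with each terminal state passed on to the next block, outputs the literal concatenation of the block outputs. We may therefore decompose $\Phi_\eps(V_z)$ along the recursive factorization of $V_z$ without any bookkeeping beyond the entry states.

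\emph{Base case.} If $z$ is a leaf, then $V_z=Q_z$ is an odd Coxeter palindrome. By Lemma~\ref{lem:leaf-projection}, $\Phi_\eps(Q_z)$ is a free-group palindrome, so it is a legitimate value of $P_z$.

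\emph{Inductive step.} Let $z$ be internal, and read $U_z\,V_{z_L}\,V_{z_R}\,\rev{U_z}$ from state $\eps$. By Lemma~\ref{lem:projection}(iii) the output is the concatenation of four pieces:
\begin{itemize}[leftmargin=2em]
\item $A_z=\Phi_\eps(U_z)$;
\item the output of $V_{z_L}$ read from the state $\eps+|U_z|$;
\item the output of $V_{z_R}$ read from the state reached after $V_{z_L}$;
\item the output of $\rev{U_z}$ read from the final state.
\end{itemize}
The induction hypothesis, applied to $z_L$ and $z_R$ in these actual entry states, identifies the two middle pieces literally with $\mathcal C(z_L)$ and $\mathcal C(z_R)$. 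It also identifies their variables with the projections of their blocks in their actual entry states, as the statement requires. For the last piece we apply Lemma~\ref{lem:return} with $V=V_{z_L}V_{z_R}$. It gives $A_z^{-1}$ if $|V|$ is even and $\rev{A_z}$ if $|V|$ is odd. By Lemma~\ref{lem:subtree-parity}, applied to each child and added, $|V|\equiv\lambda(z_L)+\lambda(z_R)=\lambda(z)\pmod 2$. So the last piece is exactly $\rho_{\lambda(z)}(A_z)$, matching \eqref{eq:catalan-template}.

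\emph{Reducedness of the root instance.} The projected root instance equals $\Phi_\eps(V_{\mathrm{root}})$ as a word, and $V_{\mathrm{root}}$ is reduced by hypothesis. Lemma~\ref{lem:projection}(i) then shows that this word is freely reduced.

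The main point requiring care is the use of Lemma~\ref{lem:return} and Lemma~\ref{lem:subtree-parity} on subwords $V_z$ that need not be reduced on their own terms. Neither lemma uses reducedness: Lemma~\ref{lem:return} only counts state flips, and Lemma~\ref{lem:subtree-parity} only counts lengths. Reducedness is used only once, at the root, for the final freeness claim. A second point to check is that Lemma~\ref{lem:subtree-parity} applies to contour subtrees built from arbitrary words $U_y$, including the empty words at inserted vertices. Its proof uses only the shape $U\cdots\rev U$ and the odd length of the leaf blocks, so it does.
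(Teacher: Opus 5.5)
Your proposal is correct and follows the paper's proof essentially step for step. Both use structural induction over both entry states, Lemma~\ref{lem:leaf-projection} at the leaves, and literal block concatenation via Lemma~\ref{lem:projection}(iii). Both then combine Lemma~\ref{lem:return} with Lemma~\ref{lem:subtree-parity} to produce $\rho_{\lambda(z)}(A_z)$, and finish with Lemma~\ref{lem:projection}(i) at the root.

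One small refinement to your remark that reducedness is used only once. A literal template instance must be a reduced word, and its leaf values must be reduced palindromes. So reducedness is also needed below the root. It comes for free, because every $V_z$ is a contiguous subword of the reduced root contour; the paper says exactly this.
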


\begin{proof}
We argue by structural induction, simultaneously for both possible entry
states.  At a leaf, Lemma~\ref{lem:leaf-projection} says that
$\Phi_\eps(Q_z)$ is a palindrome, so the assertion is exactly the one-leaf
template.

Let $z$ be internal and write
\[
        V_z=U_zV_LV_R\rev{U_z}.
\]
Read $U_z$ in the actual entry state $\eps$ and put
$A_z=\Phi_\eps(U_z)$.  Pass the resulting state to $V_L$ and then the
terminal state of $V_L$ to $V_R$.  The induction hypothesis applies to both
children because it was stated for an arbitrary entry state, so their
outputs are literal instances of their respective subtree templates.

The middle block $V_LV_R$ has
\[
        |V_LV_R|
          \equiv \lambda(z_L)+\lambda(z_R)
          =\lambda(z)\pmod2,
\]
where $\lambda(y)$ denotes the number of leaves below $y$; this is Lemma~\ref{lem:subtree-parity}.  Lemma~\ref{lem:return} therefore shows that the
returning copy $\rev{U_z}$ projects to
\[
        \rho_{\lambda(z)}(A_z).
\]
Since the two-state transducer concatenates the outputs of consecutive
blocks literally, the complete output is
\[
        A_z\,
        \mathcal C(z_L)\,
        \mathcal C(z_R)\,
        \rho_{\lambda(z)}(A_z),
\]
with the displayed child templates understood under the assignments supplied
by induction.  This is exactly the Catalan recursion at $z$.

Finally, the root contour is reduced by hypothesis.  Every subtree contour
is a contiguous subword of it and is therefore reduced as well.
Lemma~\ref{lem:projection}(i) shows in particular that the root projection in
either entry state is freely reduced.
\end{proof}

\section{Catalan forms}

\paragraph{Overview.}
Sections~2--5 prove the structural implication from bounded palindromic
length to a Catalan contour.  We now prove the converse directly at word level.  The only algebraic mechanism is
that an arbitrary boundary word $A$ can be wrapped around a product of
palindromes without increasing the number of palindrome factors, provided
the returning copy of $A$ is chosen according to the parity rule $\rho_r$.
This is exactly the rule already forced on us by the two-state projection.

We first record the closure property needed for the converse.

\begin{lemma}\label{lem:closure}
Let $A$ be any free word.  If $h$ is a product of $r$ palindromes, then
\[
        A h\rho_r(A)
\]
is again a product of $r$ palindromes.
\end{lemma}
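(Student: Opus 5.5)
Write $h=p_1p_2\cdots p_r$ with each $p_i$ a palindrome; identity factors are allowed, so we may use exactly $r$ factors.  Everything here is an identity in the group $F$, so we may freely insert cancelling pairs $AA^{-1}$.  The basic observation is that conjugating a palindrome by an arbitrary word $A$, in the twisted sense $A\,p\,\rev{A}$, gives a palindrome again.  Indeed, on words $\rev{A}=\theta(A)^{-1}$.  Freely reducing $A p\rev{A}$ is compatible with reversal, because reversal is an anti-automorphism of the free monoid that preserves inverse pairs and hence commutes with free reduction.  Since the unreduced word $A p \rev{A}$ is invariant under reversal, so is its reduced form, and therefore $Ap\rev{A}$ is a palindrome.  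Equivalently, in the semidirect-product language, $\theta(Ap\rev A)=\theta(A)\theta(p)\theta(\rev A)=\rev{A}^{-1}p^{-1}A^{-1}=(Ap\rev A)^{-1}$, and \cite[Lemma~1.4]{BST} applies.

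The plan is to telescope, alternating the two kinds of twisting.  I insert $\rev{A}\,\rev{A}^{-1}$ and $A^{-1}A$ alternately between consecutive factors:
\[
A h\rho_r(A)=\bigl(Ap_1\rev A\bigr)\bigl(\rev A^{-1}p_2A^{-1}\bigr)\bigl(Ap_3\rev A\bigr)\bigl(\rev A^{-1}p_4A^{-1}\bigr)\cdots .
\]
The odd-indexed factors have the form $Ap\rev A$ and are palindromes by the observation above.  The even-indexed factors have the form $B p \rev B$ with $B=\rev A^{-1}$, because $\rev{B}=\rev{\rev{A}^{-1}}=A^{-1}$; so they are palindromes as well.

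It remains to check the final closing factor, which comes from the parity of $r$.  If $r$ is odd, the last factor is $Ap_r\rev A$, so the product equals $Ah\rev A=Ah\rho_r(A)$.  If $r$ is even, the last factor is $\rev A^{-1}p_rA^{-1}$, so the product equals $AhA^{-1}=Ah\rho_r(A)$.  The intermediate insertions cancel in the group, so the displayed product equals $Ah\rho_r(A)$ and consists of exactly $r$ palindromes.

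The case $r=0$ is consistent: then $h=1$, and $AA^{-1}=1$ is the empty product.  The main point needing care is the first step, namely that $Ap\rev A$ is a palindrome after free reduction, since the $Ap\rev{A}$ terms can cancel.  I would handle this with the $\theta$ characterization rather than by reasoning about reduced words directly.  No step looks seriously obstructive.
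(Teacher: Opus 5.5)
Your proof is correct and follows the paper's argument: the same alternating factorization into $Ap_i\rev{A}$ and $(\rev{A})^{-1}p_iA^{-1}$, each factor shown to be a palindrome via the $\theta$-criterion of \cite[Lemma~1.4]{BST}, and the same telescoping with the parity of $r$ deciding the closing factor. Your word-level argument (reversal commutes with free reduction) and your substitution $B=(\rev{A})^{-1}$ for the even-indexed factors are valid minor variants of the paper's direct $\theta$ computation for both factor types.
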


\begin{proof}
If $r=0$, then $h=1$ and $\rho_0(A)=A^{-1}$, so the assertion is
immediate.  Assume $r\ge1$.  Write $h=p_1\cdots p_r$, with every $p_i$ a
palindrome, and let $a\in F$ be the element represented by the word $A$.  The word $\rev A$ represents
$\theta(a)^{-1}$.  Hence the two alternating types of factors
\[
        A p_i\rev A,
        \qquad
        (\rev A)^{-1}p_iA^{-1}
\]
represent, respectively,
\[
        a p_i\theta(a)^{-1},
        \qquad
        \theta(a)p_i a^{-1}.
\]
Both are palindrome elements.  Indeed, since
$\theta(p_i)=p_i^{-1}$,
\[
\theta\!\left(a p_i\theta(a)^{-1}\right)
   =\theta(a)p_i^{-1}a^{-1}
   =\left(a p_i\theta(a)^{-1}\right)^{-1},
\]
and similarly
\[
\theta\!\left(\theta(a)p_i a^{-1}\right)
   =a p_i^{-1}\theta(a)^{-1}
   =\left(\theta(a)p_i a^{-1}\right)^{-1}.
\]
By the palindrome criterion from Section~2, the reduced representative of
each of these elements is therefore a palindrome.  Notice that this argument
does not assume that the displayed concatenations are themselves freely
reduced.

Now alternate these two types of palindrome factors:
\[
 A p_1\rev A,
 \quad (\rev A)^{-1}p_2A^{-1},
 \quad A p_3\rev A,
 \quad (\rev A)^{-1}p_4A^{-1},\ldots .
\]
Their adjacent boundary pieces cancel in the group.  With two factors the
product is
\[
 A p_1p_2A^{-1},
\]
whereas with three factors it is
\[
 A p_1p_2p_3\rev A.
\]
Continuing in this way, the full product is $AhA^{-1}$ when $r$ is even and
$Ah\rev A$ when $r$ is odd, which is exactly $Ah\rho_r(A)$.
\end{proof}

\begin{proposition}\label{prop:template-sufficient}
Every literal instance of a $k$-leaf Catalan template represents an element
of palindromic length at most $k$.
\end{proposition}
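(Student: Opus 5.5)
We argue by structural induction on the ordered full binary tree $\mathcal T$. The claim we prove is slightly stronger: for every vertex $v$ of $\mathcal T$ and every substitution of words for the variables (palindromes at the leaves, arbitrary words at the internal vertices), the group element represented by the concatenation $\mathcal C(v)$ is a product of exactly $\lambda(v)$ palindromes. Identity palindromes are allowed as factors. Taking $v$ to be the root gives the proposition, since a literal instance of the template is in particular a word representing that same group element.

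\emph{Leaf case.} Here $\mathcal C(v)=P_v$, and $P_v$ is a palindrome by hypothesis. So it is a product of $\lambda(v)=1$ palindrome.

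\emph{Internal vertex.} Let $v$ have children $v_L,v_R$. By induction, $\mathcal C(v_L)$ represents a product of $\lambda(v_L)$ palindromes and $\mathcal C(v_R)$ represents a product of $\lambda(v_R)$ palindromes. Their concatenation therefore represents $h$, a product of $r=\lambda(v_L)+\lambda(v_R)=\lambda(v)$ palindromes. We now apply Lemma~\ref{lem:closure} with $A=A_v$. It says that $A_v\,h\,\rho_r(A_v)$ is again a product of $r$ palindromes. This element is exactly the one represented by \eqref{eq:catalan-template}, because the return operation in the template is $\rho_{\lambda(v)}$ and $\lambda(v)=r$. This completes the induction.

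The one point needing care is that Lemma~\ref{lem:closure} is a statement about group elements, whereas the template is a concatenation of words. Concatenation of words maps to multiplication in $F$, whether or not the result is freely reduced. The proof of Lemma~\ref{lem:closure} explicitly does not assume reducedness, so it applies to each subtree's word regardless of whether that subword is reduced.

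The induction thus gives a product of exactly $k$ palindromes for the root, and hence $\pl\le k$. An alternative route would go through Proposition~\ref{prop:bounded}, lifting each leaf to a reflection and each internal vertex to a conjugation. The direct induction via Lemma~\ref{lem:closure} avoids that detour, so we expect no real obstacle beyond this bookkeeping of the parity index.
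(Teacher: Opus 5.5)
Your proof is correct and follows the paper's argument: structural induction on the tree, applying Lemma~\ref{lem:closure} at each internal vertex with $A=A_v$ and $r=\lambda(v)$. The only cosmetic difference is that you keep the count at exactly $\lambda(v)$ factors (identity palindromes allowed) throughout, while the paper tracks ``at most'' and pads with identity palindromes before invoking the lemma.
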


\begin{proof}
We induct on the tree.  A one-leaf template is just its leaf variable, hence
is a palindrome.  Now let an internal vertex $v$ have left and right
subtrees with $i$ and $j$ leaves.  By induction, their literal values are
products of at most $i$ and at most $j$ palindromes.  Concatenating these
factorizations gives at most $i+j=\lambda(v)$ palindromes.  If fewer occur,
insert identity palindromes so that there are exactly $\lambda(v)$ factors.
Lemma~\ref{lem:closure}, with the internal label $A_v$, then shows that
\[
 A_v\,\mathcal C(v_L)\,\mathcal C(v_R)\,
 \rho_{\lambda(v)}(A_v)
\]
is still a product of $\lambda(v)$ palindromes.  At the root this gives at
most $k$ palindromes.
\end{proof}

The first few templates make the recursion explicit.
For one leaf the template is simply $P$.  For two leaves there is one binary
tree and the template is
\[
       APQA^{-1}.
\]
For three leaves there are two tree shapes, giving
\[
       APBQRB^{-1}\rev A,
       \qquad
       ABPQB^{-1}R\rev A.
\]
Here $P,Q,R$ are palindromes and $A,B$ are arbitrary words.  These are the
first visible instances of the Catalan pattern: each internal word returns as
an inverse when it encloses an even number of leaves and as a reversal when
it encloses an odd number.

The converse is the main structural statement.

\begin{theorem}[Catalan template characterization]\label{thm:catalan}
Let $g\in F$, let $w$ be its reduced word, and let $k\ge1$.  Then the
following are equivalent.
\begin{enumerate}[label=(\roman*)]
\item $\pl(g)\le k$.
\item The word $w$ is a literal instance of $\mathcal C(\mathcal T)$ for
some ordered full binary tree $\mathcal T$ with $k$ leaves.
\end{enumerate}
Hence $C_{k-1}$ tree shapes suffice, where
\[
        C_{k-1}=\frac1k\binom{2k-2}{k-1}.
\]
No assertion is made that these templates remain pairwise irredundant after
empty substitutions are allowed.
\end{theorem}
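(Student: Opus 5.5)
The implication (ii)$\Rightarrow$(i) is exactly Proposition~\ref{prop:template-sufficient}, so the work is in (i)$\Rightarrow$(ii). There the plan is to run the chain of Sections~2--5 on the lift $h=gt^{k\bmod 2}$. By Proposition~\ref{prop:bounded}, $\pl(g)\le k$ gives $\ell_T(h)\le k$. Let $v$ be the reduced Coxeter word for $h$. By Proposition~\ref{prop:delta-reflection} we obtain a noncrossing equal-label matching on $v$ with $m=\ell_T(h)\le k$ unmatched positions. Since every reflection lies in the odd coset, $m\equiv k\pmod 2$; this also follows from Lemma~\ref{lem:matching-parity} together with $|v|\equiv k$ from Corollary~\ref{cor:recovery}. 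So $k-m$ is even.

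If $m\ge1$, Proposition~\ref{prop:contour} turns the matching into an ordered full binary contour with $m$ leaves that reproduces $v$ letter for letter. Proposition~\ref{prop:project-contour}, applied with entry state $0$, then shows that $\Phi_0(v)$ is a literal instance of the $m$-leaf template. Corollary~\ref{cor:recovery} identifies $\Phi_0(v)$ with $w$.

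If $m=0$, then $h=1$, which forces $g=1$ and $k$ even. The empty word is then trivially a literal instance of any template, with all variables empty.

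It remains to pad from $m$ leaves to exactly $k$ leaves, adding two leaves at a time. The plan is to replace some leaf $P$ by a three-leaf subtree whose template is $A P' B Q R B^{-1}\rev A$ (or any shape), with all new variables empty and with $P'=P$ and $Q=R=\varnothing$. This substitution produces the same literal word. The point that needs checking is that the return operations $\rho$ at the ancestors are unchanged. They depend only on the parity of the leaf counts below each ancestor, and those parities are preserved because exactly two leaves are added. So the literal word is unchanged and $w$ becomes an instance of a $k$-leaf template.

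In fact one can pad more simply by replacing a leaf $P$ with a vertex that has empty label and children $P$ and an empty-leaf node. This adds only one leaf, but it changes parities at the ancestors. I therefore prefer the two-leaf padding, which matches the parity fact $k\equiv m$.

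The count $C_{k-1}$ is then the standard enumeration of ordered full binary trees with $k$ leaves.

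The main obstacle I expect is the careful justification that the padding preserves literality. Concretely, one has to check that at every ancestor vertex the operator $\rho_{\lambda}$ depends only on $\lambda \bmod 2$, and that empty internal labels contribute nothing, since $\rho$ of the empty word is empty. A secondary point is confirming that the entry state $0$ used in Proposition~\ref{prop:project-contour} is the one for which Corollary~\ref{cor:recovery} gives $\Phi_0(v)=w$; both use state $0$, so this should be immediate.
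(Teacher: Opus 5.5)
Your proposal is correct and follows the paper's proof essentially step for step: lift to $h=gt^{k\bmod 2}$, take an optimal matching with $m\equiv k\pmod 2$ unmatched positions, build the contour, project from state $0$, identify the output with $w$ via Corollary~\ref{cor:recovery}, treat $m=0$ separately, and pad two leaves at a time. The only difference is where you pad: the paper grafts an all-empty cherry at the root, $\mathcal T'=(\mathcal T,\mathcal E)$, so no existing return operator changes and nothing needs rechecking (in fact, grafting a single empty leaf at the root would also preserve the word, since the new root has no ancestors), whereas your leaf replacement relies on $\rho_\lambda$ depending only on $\lambda\bmod 2$, which you correctly verify.
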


\begin{proof}
The implication (ii)$\Rightarrow$(i) is
Proposition~\ref{prop:template-sufficient}.

Assume (i), and set $\eps=k\bmod2$ and $h=gt^\eps$.  By
Proposition~\ref{prop:bounded},
\[
        m:=\ell_T(h)\le k.
\]
Choose a reduced Coxeter word $v$ for $h$.  Since every Coxeter generator is
in the odd coset, $|v|\equiv\eps\equiv k\pmod2$.  By
Proposition~\ref{prop:delta-reflection}, there is an optimal noncrossing
matching on $v$ with exactly $m$ unmatched positions.  Lemma~
\ref{lem:matching-parity} therefore gives
\[
        m\equiv |v|\equiv k\pmod2.
\]
This is the parity statement needed for the padding step below; in
particular, $k-m$ is even.

Suppose first that $m>0$.  Proposition~\ref{prop:contour} gives an ordered
full binary contour tree with $m$ leaves whose root contour is the reduced
Coxeter word $v$.  Proposition~\ref{prop:project-contour}, applied from
initial state $0$, turns this contour into a literal $m$-leaf Catalan
template instance.  Corollary~\ref{cor:recovery} identifies the total output
with the reduced free word $w$.  Thus $w$ is an $m$-leaf Catalan-template
instance.

If $m=k$, we are done.  If $m<k$, then $k-m$ is a positive even number.  We
use an explicit padding operation that adds two leaves without changing the
literal word.  Given an $m$-leaf tree $\mathcal T$, let $\mathcal E$ be the
two-leaf tree (a cherry), and put
\[
        \mathcal T'=(\mathcal T,\mathcal E).
\]
Assign the empty word to the new root, to the internal vertex of
$\mathcal E$, and to both new leaf palindromes.  Then
$\mathcal C(\mathcal T')$ has exactly the same literal value as
$\mathcal C(\mathcal T)$, while the number of leaves has increased from
$m$ to $m+2$.  Iterating this operation $(k-m)/2$ times gives a $k$-leaf
template with the same literal value $w$.

It remains only the case $m=0$.  Reflection length zero means $h=1$.  Since
$m\equiv k\pmod2$, the integer $k$ is even, so $h=g$ and therefore $g=1$.
The reduced word $w$ is empty.  Assigning the empty word to every variable
of any $k$-leaf template gives exactly this empty word.  This completes the
proof.
\end{proof}

\paragraph{Consequences.}
For fixed $k$, there are only finitely many tree shapes, namely the Catalan
number $C_{k-1}$.  The arbitrary words on internal vertices absorb all
possible cancellation patterns between palindrome factors; the leaves carry
the actual palindrome data.  Thus the theorem is a normal form for the
\emph{reduced word} itself, not merely an existence statement about a product
in the group.

\begin{remark}\label{rem:arbitrary-state}
The quantification over both entry states in
Proposition~\ref{prop:project-contour} is essential.  The right child of an
internal vertex need not be entered in state $0$; formulating the projection
theorem for the actual propagated state removes this otherwise implicit
point from the proof of Theorem~\ref{thm:catalan}.
\end{remark}

\section{Four palindromes}

\paragraph{The five four-leaf shapes.}
A full binary tree with four ordered leaves has only three possible root
splittings:
\[
        1+3,\qquad 2+2,\qquad 3+1.
\]
A three-leaf subtree has two possible shapes, $1+2$ and $2+1$.  Hence the
three root splittings contribute $2+1+2=5$ trees in total.  This is the
concrete origin of the five forms; no additional case analysis is hidden in
the argument.

For $k=4$ there are $C_3=5$ ordered full binary trees.  Frid proposed the five forms below in \cite[Section~5]{Frid}.  Let the leaves be
$P,Q,R,S$ from left to right, and name the three internal variables
$A,B,C$ in preorder.  Formula \eqref{eq:catalan-template} gives the
following five forms:
\begin{align*}
&APBQCRSC^{-1}\rev B A^{-1},\\
&APBCQRC^{-1}S\rev B A^{-1},\\
&ABPQB^{-1}CRSC^{-1}A^{-1},\\
&ABPCQRC^{-1}\rev BSA^{-1},\\
&ABCPQC^{-1}R\rev BSA^{-1}.
\end{align*}
Here $P,Q,R,S$ are palindromes and $A,B,C$ are arbitrary words, with empty
words permitted.

The rule for the returning copy of an internal label is determined only by
the number of leaves below it.  A label surrounding two or four leaves
returns as its inverse; a label surrounding one or three leaves returns as
its reversal.  For example, in the middle shape $(PQ)(RS)$ the label $B$
surrounds the two leaves $P,Q$, which is why it returns as $B^{-1}$, while
the root label $A$ surrounds all four leaves and therefore returns as
$A^{-1}$.  This parity rule explains every inverse/reversal in the five
displayed expressions.

\paragraph{Expansion of the middle shape.}
Take the middle tree $(PQ)(RS)$.  The left internal word $B$ surrounds two
leaves, so it returns as $B^{-1}$; the right internal word $C$ also surrounds
two leaves, so it returns as $C^{-1}$; and the root word $A$ surrounds all
four leaves, so it returns as $A^{-1}$.  Reading the tree from left to right
therefore gives
\[
       A\,B\,P\,Q\,B^{-1}\,C\,R\,S\,C^{-1}\,A^{-1},
\]
which is exactly the third displayed form.  The other four formulas are read
from their trees in the same way.  Thus the formulas are not independent
case guesses: each is the literal expansion of one ordered tree shape.

\begin{corollary}[Four-palindrome classification]\label{cor:frid}
Let $w$ be a reduced word representing $g\in F$.  Then $\pl(g)\le4$ if
and only if $w$ is a literal instance of one of the five forms above.
\end{corollary}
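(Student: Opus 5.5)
The plan is to specialize Theorem~\ref{thm:catalan} to $k=4$ and then check that the five $4$-leaf Catalan templates $\mathcal C(\mathcal T)$ coincide, as literal word patterns, with the five displayed forms. By Theorem~\ref{thm:catalan}, $\pl(g)\le4$ holds if and only if the reduced word $w$ is a literal instance of $\mathcal C(\mathcal T)$ for some ordered full binary tree $\mathcal T$ with four leaves. Both directions of the corollary therefore reduce to one claim: the set of literal instances of the union of these templates equals the set of literal instances of the union of the five forms. I will prove this by matching the trees to the forms one by one.

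\textbf{Step 1: enumerate the trees.} I list the ordered full binary trees with four leaves by their root splitting $1+3$, $2+2$, $3+1$, and split each three-leaf subtree further as $1+2$ or $2+1$. This gives the five trees
\[
P(Q(RS)),\quad P((QR)S),\quad (PQ)(RS),\quad (P(QR))S,\quad ((PQ)R)S .
\]

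\textbf{Step 2: unfold the recursion.} For each tree I name the internal vertices $A,B,C$ in preorder and unfold \eqref{eq:catalan-template}. A vertex enclosing $r$ leaves returns its label as $\rho_r$: the inverse if $r\in\{2,4\}$ and the reversal if $r\in\{1,3\}$. In every tree the root encloses four leaves, so $A$ returns as $A^{-1}$. Take the first tree, $P(Q(RS))$. The vertex $B$ encloses three leaves and returns as $\rev B$, and $C$ encloses two leaves and returns as $C^{-1}$. Unfolding gives
\[
A\,P\,B\,Q\,C\,R\,S\,C^{-1}\,\rev B\,A^{-1},
\]
which is the first displayed form. The other four trees unfold the same way, in the listed order, into the second through fifth forms. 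The paper already checks the third, $(PQ)(RS)$, explicitly.

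\textbf{Step 3: transfer literal instances.} Each displayed form is, symbol for symbol, the template of its tree with the same variable names and the same constraints: leaves are palindromes, internal variables are arbitrary words, and empty words are allowed. Hence a word is a literal instance of a form if and only if it is a literal instance of the corresponding template. No cancellation issue arises here, because both notions are defined by exact concatenation. Combined with Theorem~\ref{thm:catalan}, this proves the corollary. The converse direction also follows directly from Proposition~\ref{prop:template-sufficient}.

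\textbf{Main obstacle.} The only place an error can occur is the bookkeeping in Step 2. Each $\rho$ must be assigned from the leaf count of its own vertex, and the preorder naming must match the displayed forms. I will verify this separately for the two three-leaf shapes $1+2$ and $2+1$, since the asymmetric placement of $\rev B$ relative to $S$ or $P$ is where a mismatch with Frid's list could hide. The substantive content, the case $m\in\{0,2,4\}$ and the padding argument, is already contained in the proof of Theorem~\ref{thm:catalan} for $k=4$.
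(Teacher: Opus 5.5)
Your proposal is correct and follows the paper's own proof. You specialize Theorem~\ref{thm:catalan} to $k=4$, list the five ordered full binary trees, unfold \eqref{eq:catalan-template} with each $\rho_r$ fixed by its vertex's leaf count, and cite Proposition~\ref{prop:template-sufficient} for the converse. The bookkeeping you flagged does check out: with preorder naming, each of the other four trees unfolds symbol for symbol into its displayed form.
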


\begin{proof}
For the forward implication, assume $\pl(g)\le4$.  Theorem~\ref{thm:catalan} gives a literal four-leaf Catalan
template.  The five ordered full binary trees on four leaves are exactly
\[
P(Q(RS)),\qquad P((QR)S),\qquad (PQ)(RS),\qquad
(P(QR))S,\qquad ((PQ)R)S.
\]
Applying the recursive rule \eqref{eq:catalan-template} to these five shapes
produces the five displayed expressions.

Conversely, each displayed expression is by construction a literal instance
of the corresponding four-leaf Catalan template.  Proposition~
\ref{prop:template-sufficient} therefore shows that the represented element
has palindromic length at most four.  This proves both directions.
\end{proof}

\section{Examples and further remarks}

\paragraph{Example.}
The next word looks unfavorable if one only searches for palindromic
subwords of its reduced representative.  Nevertheless it has palindromic
length four because cancellation between different palindrome factors is
allowed in the group.  The Coxeter matching model detects this efficiently
and also gives the lower bound, illustrating why the method is stronger than
literal cutting of the free word.

The algorithm is a group-theoretic algorithm rather than a monoid
factorization algorithm.  For instance, put
\[
        w=aba^{-1}b^{-1}ab\in F(a,b).
\]
As a word over the alphabet $\{a^{\pm1},b^{\pm1}\}$, $w$ has no
palindromic factor of length greater than one, so literal palindromic cutting
requires six factors.  In the group, however,
\[
        w=(a)(ba^{-1}b)(b^{-3})(bab),
\]
so $\pl(w)\le4$.  The lower bound is equally transparent in the Coxeter
model.  Writing $s_a=at$, $s_b=bt$ and $s_0=t$, free reduction gives
\[
        wt=(s_as_0s_b)^3.
\]
In this nine-letter reduced Coxeter word, a noncrossing equal-label matching
has at most two pairs.  Indeed, a pair of span six leaves room for at most one
pair inside and none outside, while if all pairs have span three then their
supporting intervals must be disjoint, so again at most two occur.  Two pairs
are attained, for example at positions $(1,4)$ and $(5,8)$.  Hence
$\ell_T(wt)=9-4=5$.  Proposition~\ref{prop:bounded} with $k=3$ gives
$\pl(w)>3$, and therefore
\[
        \pl(aba^{-1}b^{-1}ab)=4.
\]

The Catalan theorem should be viewed as a reduced-word characterization of
bounded palindromic length.  Frid's descriptions for lengths two and three are the
first two nontrivial instances; her five proposed forms for length four are
the next one.  The universal-Coxeter model avoids a case analysis of the
relative amounts of cancellation between neighboring palindromes: those
cancellations are encoded by the laminar structure of a noncrossing
matching.

There are several natural directions.  First, the cubic dynamic program may
admit faster implementations.  Second, it would be interesting to know how
much of the construction survives for other graph products or free
products, where palindromes are defined in syllable normal form.  Finally,
the reflection-length viewpoint suggests comparing palindromic metrics with
absolute metrics in other groups possessing canonical involutive
extensions.

\paragraph{Proof summary.}
For reference, the proof can be compressed to the following implications:
\[
\begin{array}{c}
\pl(g)\le k\\
\Downarrow\ \text{Proposition~\ref{prop:bounded}}\\
\ell_T(gt^{k\bmod2})\le k\\
\Downarrow\ \text{Proposition~\ref{prop:delta-reflection}}\\
\text{optimal matching with }m\le k\text{ unmatched positions}\\
\Downarrow\ \text{if }m=0\text{, handle the empty word separately; if }m>0\\
\Downarrow\ \text{Proposition~\ref{prop:contour}}\\
\text{an }m\text{-leaf binary contour}\\
\Downarrow\ \text{Proposition~\ref{prop:project-contour}}\\
\text{a literal }m\text{-leaf Catalan template}\\
\Downarrow\ \text{Lemma~\ref{lem:matching-parity} and }(+2)\text{ padding}\\
\text{a literal }k\text{-leaf Catalan template}.
\end{array}
\]
The reverse implication is Proposition~\ref{prop:template-sufficient}.  For
$k=4$, the five possible tree shapes give Corollary~\ref{cor:frid}.

\section{Formal verification and scope}

The main mathematical result is Theorem~\ref{thm:catalan};
Corollary~\ref{cor:frid} is its four-leaf specialization.  Its logical chain
is entirely contained in the preceding sections: palindromes are converted to reflections, Dyer's theorem converts
reflection length to unmatched positions, the matching is converted to a
literal contour, the two-state projection recovers the original reduced free
word, and the Catalan closure lemma gives the converse implication.

The companion Lean 4 development included as ancillary material formalizes
the four-palindrome classification end to end.  Its final theorem is stated
for every finite rank $n$ and every reduced word $w$ over the basis of
\texttt{FreeGroup (Fin n)}.  On the left-hand side, palindromic length is
defined using multiplication in the free group, with each factor tested by
whether its unique reduced representative is a palindrome.  On the
right-hand side, the reduced word itself is required to be literally equal to
one of the five displayed forms, with all substitution words lying in the
same finite basis.

Thus the formal statement matches Corollary~\ref{cor:frid} at the
reduced-word level; no noncrossing matching, Coxeter representative,
projection, or Catalan representation is assumed as additional input.  The
formalization also contains the palindrome/reflection bridge, the
reflection/matching equivalence, the matching-to-Catalan construction, the
projection interfaces, and the literal Catalan semantics used in the proof.
A general-$k$ Catalan interface is also present in the development, but the
formal verification claim made here is only for the four-palindrome
classification stated above.

The audited source compiles without errors, warnings, unresolved goals, or
proof placeholders, and a Lean kernel replay succeeds.  The final theorem
depends only on the standard Lean/mathlib axioms \texttt{propext},
\texttt{Classical.choice}, and \texttt{Quot.sound}; no additional
mathematical axioms are introduced.  We do not claim formal verification of
the asymptotic complexity analysis of the algorithm.

\subsection*{Summary of the representations}
For quick reference, the same object is viewed in four successive ways:
\[
\begin{array}{ccl}
\text{free group} &:& g\text{ with reduced word }w,\\[2mm]
\text{Coxeter extension} &:& gt^{\eps}\text{ with reduced Coxeter word }v,\\[2mm]
\text{matching model} &:& v\text{ with }m\text{ unmatched positions},\\[2mm]
\text{Catalan model} &:& w\text{ as an }m\text{-leaf literal template}.
\end{array}
\]
When $m>0$, the number $m$ is simultaneously the reflection length of the
Coxeter lift and the number of leaves before padding.  The case $m=0$ is the
identity case and is handled by the all-empty template.  Padding adds two
empty leaves at a time and therefore changes neither the literal free word
nor its group element.  This is why a bound by $k$ produces a template with
exactly $k$ leaves once parity is taken into account.


\begin{thebibliography}{99}

\bibitem{BST}
V.~G. Bardakov, V. Shpilrain and V. Tolstykh,
\newblock On the palindromic and primitive widths of a free group,
\newblock \emph{J. Algebra} \textbf{285} (2005), 574--585.
\newblock arXiv:math/0311257; DOI: 10.1016/j.jalgebra.2004.11.003.

\bibitem{Dyer}
M.~J. Dyer,
\newblock On minimal lengths of expressions of Coxeter group elements as
products of reflections,
\newblock \emph{Proc. Amer. Math. Soc.} \textbf{129} (2001), 2591--2595.
\newblock DOI: 10.1090/S0002-9939-01-05876-2.

\bibitem{Frid}
A.~E. Frid,
\newblock Small palindromic lengths in free groups and word equations with
antimorphisms,
\newblock preprint, arXiv:2512.10024 (2025).
\newblock DOI: 10.48550/arXiv.2512.10024.

\bibitem{KimEtAl}
S.-h. Kim, T. Koberda, J. Lee, K. Ohshika, S. P. Tan and X. Gao,
\newblock Shapes of hyperbolic triangles and once-punctured torus groups,
\newblock \emph{Math. Z.} \textbf{299} (2021), 2103--2130.
\newblock DOI: 10.1007/s00209-021-02745-3.

\bibitem{Kourovka}
E.~I. Khukhro and V.~D. Mazurov (eds.),
\newblock \emph{Unsolved Problems in Group Theory. The Kourovka Notebook},
21st ed.,
\newblock Sobolev Institute of Mathematics, Novosibirsk, 2026.
\newblock arXiv:1401.0300.

\bibitem{PiggottRuane}
A. Piggott and K. Ruane,
\newblock Normal forms for automorphisms of universal Coxeter groups and
palindromic automorphisms of free groups,
\newblock \emph{Internat. J. Algebra Comput.} \textbf{20} (2010), 1063--1086.
\newblock DOI: 10.1142/S0218196710006035.

\bibitem{Saarela}
A. Saarela,
\newblock Palindromic length in free monoids and free groups,
\newblock in \emph{Combinatorics on Words}, Lecture Notes in Comput. Sci.
\textbf{10432}, Springer, 2017, 203--213.
\newblock DOI: 10.1007/978-3-319-66396-8\_19.

\end{thebibliography}
\end{document}